\documentclass[11pt,spanish,english]{amsart}

\usepackage{amsmath, amssymb, amsthm}
\usepackage{geometry}
\geometry{verbose,tmargin=2.4cm,bmargin=2.4cm,lmargin=2.3cm,rmargin=2.3cm}
\pagestyle{plain}
\usepackage{babel}
%\addto\shorthandsspanish{\spanishdeactivate{~<>}}

%\usepackage[usenames,dvipsnames]{xcolor}
%\usepackage[normalem]{ulem}
%\usepackage{placeins}
%\usepackage{cancel}

%\usepackage{units}
%\usepackage{amsthm}
%\usepackage{amstext}
%\usepackage{amssymb}
%\usepackage{setspace}
%\usepackage[authoryear]{natbib}
%\onehalfspacing
\usepackage[unicode=true,
bookmarks=true,bookmarksnumbered=true,bookmarksopen=true,bookmarksopenlevel=3,
breaklinks=false,pdfborder={0 0 0}]{hyperref}                  %backref=page,colorlinks=false]
\hypersetup{pdftitle={On the torsion units of the integral group ring of finite projective special linear groups},
	pdfauthor={Angel del Rio and Mariano Serrano}}

\DeclareMathOperator{\Tr}{Tr}
\DeclareMathOperator{\PSL}{PSL}
\DeclareMathOperator{\SL}{SL}
\DeclareMathOperator{\TPA}{TPA}
\DeclareMathOperator{\VPA}{VPA}
\DeclareMathOperator{\PA}{PA}
\DeclareMathOperator{\diag}{diag}
\DeclareMathOperator{\Odd}{Odd}
\newcommand{\pa}{\Upsilon}

\newtheorem{theorem}{Theorem}[section]
\newtheorem{lemma}[theorem]{Lemma}
\newtheorem{proposition}[theorem]{Proposition}

\newtheorem{corollary}[theorem]{Corollary}
\newtheorem{remark}[theorem]{Remark}

%\newcommand{\qed}{\enspace\vrule height6pt width4pt depth2pt}

%\def\thetheorem{\arabic{theorem}}
%\def\thetheorem{\thesection.\arabic{theorem}}
%\def\theequation{\thesection.\arabic{equation}}

%\newenvironment{proof}{\par\noindent{\bf Proof.}}{$\qed$\par\bigskip}

%comandos nuevos

%\newcommand{{\bf H}om}{\mbox{\rm Hom}}

%\newcommand{\mod}{\mbox{\rm mod}}

\newcommand{\Z}{{\mathbb Z}}
\newcommand{\Q}{{\mathbb Q}}

\renewcommand{\C}{{\mathbb C}}

\newcommand{\matriz}[1]{\begin{array} #1 \end{array}}
\newcommand{\pmatriz}[1]{\left(\begin{array} #1 \end{array}\right)}
\newcommand{\GEN}[1]{\langle #1 \rangle}

\thanks{The first author has been partially supported by CAPES (Proc nº BEX4147/13-8) of Brazil. The second author has been partially supported by Ministerio de Econom\'{\i}a y Competitividad project MTM2012-35240 and Fondos FEDER and Proyecto Hispano-Brasile\~{n}o de Cooperaci\'{o}n Interuniversitaria PHB-2012-0135.}

\author{\'{A}ngel del R\'{\i}o}
\address{\'{A}ngel del R\'{\i}o, Departamento de Matem\'{a}ticas, Universidad de Murcia,
30100, Murcia, Spain}
\email{adelrio@um.es}

\author{Mariano Serrano}
\address{Mariano Serrano, Department of Mathematics, University of Murcia.}
\email{mariano.serrano@um.es}

\thanks{Partially supported by Ministerio de Econom\'{\i}a y Competitividad project MTM2012-35240 and Fondos FEDER and Fundaci\'{o}n S\'{e}neca of Murcia 19880/GERM/15.}

%\subjclass{Primary 16S34, 16U80; Secondary 16W10}
%
%\keywords{Group Rings, Symmetric Elements, Lie Properties}

\title{On the torsion units of the integral group ring of finite projective special linear groups}

\begin{document}

	\begin{abstract}
		H. J. Zassenhaus conjectured that any unit of finite order and augmentation one in the integral group ring of a finite group $G$ is conjugate in the rational group algebra to an element of $G$.
		One way to verify this is showing that such unit has the same distribution of partial augmentations as an element of $G$ and the HeLP Method provides a tool to do that in some cases.
		In this paper we use the HeLP Method to describe the partial augmentations of a hypothetical counterexample to the conjecture for the projective special linear groups.
	\end{abstract}
	
	\maketitle
	
	\section{Introduction}
	
	Let $G$ be a finite group and let $\mathbb{Z}G$ be the integral group ring of $G$.
	Denote by $V(\mathbb{Z}G)$ the group of normalized units (i.e. units of augmentation $1$) in $\mathbb{Z}G$.
	Hans Zassenhaus stated a list of conjectures about finite subgroups of $V(\Z G)$.
	In this paper we deal with the unique one which is still open.
	It states that every torsion element of $V(\Z G)$ is rationally conjugate (i.e. conjugate in the units of $\Q G$) to an element of $G$.
	We refer to this statement as the Zassenhaus Conjecture.
	It has been verified for nilpotent groups \cite{Weiss1991}, for cyclic-by-abelian groups \cite{CaicedoMargolisdelRio2013} and for $p$-group-by-abelian $p'$-groups \cite{HertweckAlgColloq}.
	For non-solvable groups the Zassenhaus Conjecture has been proved only for a few simple or almost simple groups
	\cite{HughesPearson, LutharPassi1989, LutharTrama1991, HertweckA6, Gildea2013, KimmerleKonovalov2013, HertweckPA, BachleMargolis2014, MargolisThesis, BachleMargolis4Primary, MargolisDelRioSerrano2016}.
	Actually all the simple groups for which the Zassenhaus Conjecture has been proved are of the form $\PSL(2,p^f)$, i.e.  projective special linear groups.
	
	Suppose that $G=\PSL(2,p^f)$ with $p$ a prime integer and let $u$ be an element of order $n$ in $V(\Z G)$.
	Hertweck proved that $u$ is rationally conjugate to an element of $G$ provided that $n$ is prime different from $p$, or $n=p$ and $f\le 2$, or $n=6$ \cite{HertweckPA}. This was extended by Margolis to $p$-regular elements of prime power order \cite{Margolis2016}.
	Recently Margolis, del R\'{\i}o and Serrano have proved that $u$ is rationally conjugate to an element of $G$ if $n$ is coprime with $2p$ \cite{MargolisDelRioSerrano2016}.
	The aim of this paper is showing that the main tool used to prove these results fails for the next natural case to consider, namely when $n=2t$ with $t$ prime and greater than $4$. On the positive side the main result of the paper provides significant information on a possible counterexample to the Zassenhaus Conjecture of this kind.
	
	The tool mentioned in the previous paragraph is the HeLP Method which was introduced by Luthar and Passi in \cite{LutharPassi1989} and improved by Hertweck in \cite{HertweckPA}.
	The basic idea of the HeLP Method, for $G$ an arbitrary finite group, is as follows:
	Given an element $u\in V(\Z G)$ of order $n$, we call \emph{distribution of partial augmentations} of $u$ to the partial augmentations of the elements $u^d$ with $d$ running on the divisors of $n$.
	Let
	$$\TPA_n(G) = \{\text{Distributions of partial augmentations of elements of } G \text{ of order } n\}$$
	and
	$$\PA_n(G) = \{\text{Distributions of partial augmentations of elements of } V(\Z G) \text{ of order } n\}.$$
	By a theorem of Marciniak, Ritter, Sehgal and Weiss, every element of $V(\Z G)$ of order $n$ is rationally conjugate to an element of $G$ if and only $\TPA_n(G)=\PA_n(G)$ (see Theorem~\ref{thm:known facts about partial augmentation}.(\ref{thm:ZC1 and augmentations})).
	Calculating $\TPA_n(G)$ is very easy but, unfortunately, calculating $\PA_n(G)$ is usually difficult.
	The HeLP Method consists in calculating a set $\VPA_n(G)$ containing $\PA_n(G)$ which will be defined in Section~\ref{Preliminaries And Main Result}.
	We call the elements of $\VPA_n(G)$, \emph{distributions of virtual partial augmentations} of order $n$ for $G$, because they satisfy some properties known for distributions of partial augmentations of elements of order $n$ in $V(\Z G)$.
	If $\VPA_n(G)=\TPA_n(G)$ then all elements of $V(\Z G)$ of order $n$ are rationally conjugate to elements of $G$ and, in case this holds for all the possible orders $n$, then the Zassenhaus Conjecture holds for $G$.
	The HeLP Method fails to verify the Zassenhaus Conjecture when $\VPA_n(G)\ne \TPA_n(G)$ for some $n$.
	Nevertheless, each element of $\VPA_n(G)\setminus \TPA_n(G)$ provides relevant information of a possible counterexample to the Zassenhaus Conjecture.
	Indeed, it determines a conjugacy class in the normalized units of $\Q G$ formed by elements with integral partial augmentations. Actually, using the representation theory of $\Q G$ one can find a concrete representative of this class. 
	With this information at hand, to prove the Zassenhaus Conjecture one should prove that none of these conjugacy classes contains an element with integral coefficients and to disprove it one should find an element in this class with integral coefficients.
	In this paper we prove that for $G=\PSL(2,p^f)$ and $t$ an odd prime different from $p$ the elements of $\VPA_{2t}(G)\setminus \TPA_{2t}(G)$ are of a very specific form (see Theorem~\ref{main}). So, it could be potentially used to prove or disprove the Zassenhaus Conjecture in this case, but completing this would require new techniques.

	The paper is organized as follows: In Section~\ref{Preliminaries And Main Result} we introduce the basic notation of the paper, explain the HeLP Method and state the main result of the paper. In Section~\ref{SectionVirtual} we collect some general technical results. In Section~\ref{SectionAccumulated} we recall the representation theory of a projective special linear group $G=\PSL(2,q)$ and calculate the sums $\widetilde{\pa}_d(m)=\sum_{g^G,|g|=m} \pa_d(g)$ for $\pa \in \VPA_{rt}( G)$ for $r$ and $t$ two different primes not dividing $q$ and $d\mid rt$. We call these sums the \emph{accumulated virtual partial augmentations}.
	Finally, in Section~\ref{SectionProof} we prove Theorem~\ref{main}.

	\section{The HeLP Method and the main result}\label{Preliminaries And Main Result}
	
	In this section we introduce the basic notation and state the main result of the paper.
	
	Let $\Z_{\ge 0}$ denote the set of non-negative integers.
	For a positive integer $n$ we always use $\zeta_{n}$ to denote a complex primitive $n$-th root of unity.
	If $F/K$ is an extension of number fields then $\Tr_{F/K}:F\rightarrow K$ denotes the trace map.
	
	Let $G$ be a finite group and let $g\in G$. Then $|g|$ denotes the order of g, $\GEN{g}$ denotes the cyclic group generated by $g$ and $g^{G}$ denotes the conjugacy class of $g$
	in $G$.
	If $\alpha$ is an element of a group ring of $G$ and $\alpha_{g}$ denotes the coefficient of an element $g$ of $G$, then the \emph{partial augmentation} of
	$\alpha$ at
	$g$ is
	$$\varepsilon_{g}(\alpha)=\sum_{h\in g^{G}}\alpha_{h}.$$
	
	The following theorems collect some well known facts, the first one on partial augmentations of torsion elements in $V(\Z G)$ and the second one on torsion elements of $V(\Z \PSL(2,q))$. See \cite[1.5, 7.3, 41.5]{SehgalBookUnits} or \cite{Berman1955,HigmanPaper,MarciniakRitterSehgalWeiss} and \cite{HertweckPA,Margolis2016}.
	
	\begin{theorem}\label{thm:known facts about partial augmentation}
		Let $G$ be a finite group and let $u$ be an element of order $n$ in $V(\mathbb{Z}G)$.
		Then the following statements hold:
		\begin{enumerate}
			\item If $u\neq1$ then $\varepsilon_{1}(u)=0$ (Berman-Higman Theorem).
			\item  $n$ divides the exponent of $G$.
			\item \label{HertweckPA}
			If $g\in G$ and $\varepsilon_{g}(u)\not=0$ then the order of $g$ divides $n$.
			\item \label{thm:ZC1 and augmentations}
			The following statements are equivalent.
			\begin{enumerate}
				\item $u$ is rationally conjugate to an element of $G$.
				\item For every $d\mid n$ there exists an element $g_{0}\in G$ such that $\varepsilon_{g}(u^{d})=0$ for every $g\in G\setminus g_{0}^{G}$.
				\item $\varepsilon_{g}(u^d)\ge 0$ for every $g\in G$ and every $d\mid n$.
			\end{enumerate}
			
		\end{enumerate}
	\end{theorem}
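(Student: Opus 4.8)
The plan is to treat these as the classical facts they are and to recall the standard arguments; no new idea is needed, but it is worth recording where the real difficulty sits.

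For (1) I would run the Berman--Higman argument through the regular representation $\rho$ of $G$ over $\C$. Since the class of the identity is a singleton, $\varepsilon_1(u)=u_1$, and the regular character gives $\chi_{\mathrm{reg}}(u)=|G|\,u_1$. As $u^n=1$, the matrix $\rho(u)$ is diagonalizable with all $|G|$ eigenvalues being $n$-th roots of unity, so $|\chi_{\mathrm{reg}}(u)|\le |G|$, with equality only when these eigenvalues coincide. If $u_1\neq 0$ then $u_1\in\Z\setminus\{0\}$ forces $|\chi_{\mathrm{reg}}(u)|\ge|G|$, hence equality and a single eigenvalue $\zeta$; rationality of $u_1$ gives $\zeta=\pm 1$, and the augmentation-one hypothesis (the trivial constituent of $\rho$ contributes the eigenvalue $1$) excludes $\zeta=-1$. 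Then $\rho(u)=\Id$ and faithfulness yields $u=1$, a contradiction.

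For (2) I would work inside $\C G\cong\prod_i M_{n_i}(\C)$: the order of $u$ is the least common multiple of the orders of its matrix components, each diagonalizable with root-of-unity eigenvalues drawn from the values of the irreducible characters, and the standard refinement shows that every prime power $p^a\mid n$ forces an element of $G$ of order divisible by $p^a$, whence $n\mid\exp(G)$. This is exactly the bound recorded in the cited references. For (3) I would quote Hertweck's method directly: assuming $\varepsilon_g(u)\neq 0$, one studies the reductions of $u$ modulo the primes dividing $|g|$ and combines Brauer characters with the vanishing of the class sums of $p$-singular elements to conclude $|g|\mid n$.

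Part (4) I would split into an easy half and a hard half. The implications $(a)\Rightarrow(b)\Rightarrow(c)$ are routine: partial augmentations are invariant under conjugation by units of $\Q G$ (they satisfy $\varepsilon_g(\alpha\beta)=\varepsilon_g(\beta\alpha)$, since $xy$ and $yx$ are always conjugate) and are determined by ordinary character values, so if $u$ is rationally conjugate to $g_0$ then $\varepsilon_h(u^d)=\varepsilon_h(g_0^d)\in\{0,1\}$ for all $h$ and all $d\mid n$, which is (b), and (b) trivially implies (c). For $(c)\Rightarrow(b)$ I would combine $\sum_{g^G}\varepsilon_g(u^d)=\varepsilon(u^d)=1$ with statement (1): non-negative integers summing to $1$ leave exactly one class with value $1$, and Berman--Higman places that class off the identity whenever $u^d\neq 1$. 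The genuine obstacle is the reverse implication $(c)\Rightarrow(a)$ (equivalently $(b)\Rightarrow(a)$): this is the Marciniak--Ritter--Sehgal--Weiss theorem, whose proof rests on Weiss' results about $\Z_p$-lattices and lies well beyond the elementary manipulations above, so here the only honest plan is to cite it.
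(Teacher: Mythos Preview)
Your treatment of parts (1)--(3) and of the easy implications $(a)\Rightarrow(b)\Rightarrow(c)$ and $(c)\Rightarrow(b)$ in (4) is standard and matches what the paper has in mind. The genuine discrepancy is your assessment of $(b)\Rightarrow(a)$ (equivalently $(c)\Rightarrow(a)$). You describe the Marciniak--Ritter--Sehgal--Weiss theorem as resting on Weiss' $\Z_p$-lattice results and lying ``well beyond the elementary manipulations above''; this is a misattribution. Weiss' lattice work is what drives the proof of the Zassenhaus Conjecture for nilpotent groups, but the equivalence in (4) is purely character-theoretic, and the paper in fact supplies the argument in the paragraph surrounding formula~\eqref{MultiplicitiesFormula}. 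The Luthar--Passi formula expresses the multiplicity of each $\zeta_n^l$ as an eigenvalue of $\rho(u)$ entirely in terms of the distribution $(\varepsilon_x(u^d))_{d\mid n}$; hence two order-$n$ elements of $V(\Z G)$ with the same distribution of partial augmentations have the same eigenvalue multisets under every complex representation, are therefore conjugate in $\C G$, and then in $\Q G$ by a routine descent (the paper cites \cite[Lemma~37.5]{SehgalBookUnits}). The paper explicitly says ``This serves as a proof of Theorem~\ref{thm:known facts about partial augmentation}.\eqref{thm:ZC1 and augmentations}''.

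So the step you flagged as the one where ``the only honest plan is to cite it'' is exactly the step the paper carries out in a few lines with ordinary character theory; no $\Z_p$-lattice machinery enters. Your plan is not wrong as a citation, but it misidentifies both the method and the depth of the implication, and it misses the argument that the paper itself provides.
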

	
	\begin{theorem}\label{thm:ConocidoPSL(2,p)}
		Let $G=\PSL(2,q)$ for an odd prime power $q$, and let $u$ be an element in $V(\Z G)$ of order $n$. Then the following statements hold:
		\begin{enumerate}
			\item\label{Spectrum}
			If either $\gcd(n,q)=1$ or $q$ is prime then $G$ has an element of order $n$.
			\item If $n$ is a prime power and $\gcd(n,q)=1$ then $u$ is rationally conjugate to an element of $G$.
		\end{enumerate}
	\end{theorem}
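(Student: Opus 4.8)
The plan is to reduce both statements to the classical description of the conjugacy classes of $G=\PSL(2,q)$, $q=p^f$, and then to invoke the HeLP-based rational conjugacy results already available in the literature. Recall that a Sylow $p$-subgroup of $G$ is elementary abelian of order $q$, while every $p$-regular element lies in a cyclic subgroup conjugate into one of two tori, of orders $a=\frac{q-1}{2}$ and $b=\frac{q+1}{2}$. Since $q$ is odd, $a$ and $b$ are consecutive integers and hence coprime, and both are coprime to $p$; consequently the set of element orders of $G$ is exactly the set of divisors of $p$, of $a$, or of $b$, and $\exp(G)=p\,a\,b$. First I would record, from Theorem~\ref{thm:known facts about partial augmentation}.(2), that $n\mid\exp(G)$, so in particular $p^2\nmid n$.

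For statement (\ref{Spectrum}) it suffices to prove that $n$ divides one of $p$, $a$, $b$, since then $G$ contains a cyclic subgroup, and hence an element, of order $n$. The easy half of this is purely arithmetic: as $p,a,b$ are pairwise coprime, any prime power dividing $\exp(G)$ divides exactly one of them, which already settles the prime-power subcase. The genuine content is to exclude \emph{mixed} orders, i.e.\ to show that $n$ cannot be divisible by two primes belonging to different factors among $p$, $a$, $b$. Note that $G$ itself has no element of such order, precisely because each of its cyclic subgroups is confined to a single torus or to a Sylow $p$-subgroup; the assertion is that no torsion unit of $V(\Z G)$ has such an order either. This is exactly the prime-graph statement for $V(\Z G)$, and it is here that the two hypotheses enter: when $\gcd(n,q)=1$ the prime $p$ is excluded and one has only to separate the primes of $a$ from those of $b$, while the clause ``$q$ prime'' is what keeps the $p$-part under control (the Sylow $p$-subgroup then being cyclic of order $p$), so that the delicate HeLP computations for order-$p$ units remain feasible. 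I would discharge these exclusions by citing the corresponding results of Hertweck \cite{HertweckPA} and Margolis \cite{Margolis2016}, which establish exactly the required non-existence of mixed-order torsion units in these ranges.

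For statement (2), existence of an element of order $n$ is already given by (\ref{Spectrum}), once we observe that a prime power $n$ coprime to $q$ divides $a$ or $b$ by coprimality. It remains to upgrade existence to the conclusion that $u$ itself is rationally conjugate to an element of $G$, i.e.\ to verify the criterion of Theorem~\ref{thm:known facts about partial augmentation}.(\ref{thm:ZC1 and augmentations}) for $u$. This is precisely the theorem of Margolis \cite{Margolis2016} that every $p$-regular torsion unit of prime power order in $V(\Z\,\PSL(2,q))$ is rationally conjugate to an element of $G$, proved by running the HeLP method with both ordinary and $p$-Brauer characters and showing that the resulting constraints force the distribution of partial augmentations of a genuine group element. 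I expect the main obstacle to lie here rather than in the existence statement: controlling all partial augmentations simultaneously requires the full ordinary and Brauer character tables of $\PSL(2,q)$ together with the non-negativity and integrality constraints underpinning HeLP, and it is exactly this step that breaks down for composite orders such as $n=2t$, which is the phenomenon to which the rest of the paper is devoted.
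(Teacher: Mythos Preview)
The paper does not give its own proof of this theorem: it is stated as a known fact and attributed to \cite{HertweckPA,Margolis2016} in the sentence preceding Theorems~\ref{thm:known facts about partial augmentation} and \ref{thm:ConocidoPSL(2,p)}. Your proposal is entirely in line with this; you correctly identify the structure of the argument (the description of element orders in $\PSL(2,q)$ via the two tori and the Sylow $p$-subgroup, together with the HeLP results of Hertweck and Margolis for the prime-graph and rational-conjugacy statements) and cite the same sources, so there is nothing to compare beyond noting that the paper simply quotes the result whereas you unpack the citation.
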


	If $u$ is a unit of $\C G$ of order $n$ then the \emph{distribution of partial augmentations} of $u$ is the
	list $(\pa_d)_{d\mid n}$ of class functions of $G$ given by $\pa_d(g)=\varepsilon_g(u^d)$, for every $g\in G$ and every divisor $d$ of $n$.
	It is easy to see that every conjugate to $u$ in $\C G$ has the same distribution of partial
	augmentations as $u$.
	In case $u$ and $u'$ are elements of $V(\Z G)$ of order $n$ with the same
	distribution of partial augmentations then $u$ and $u'$ are rationally
	conjugate. To explain this we need to introduce some notation.
	Let $\chi$ be an ordinary character of $G$.
	The linear span of $\chi$ to $\C G$ takes the form
	\begin{equation}\label{LinearSpan} \chi(u)=\sum_{x^{G}}\varepsilon_{x}(u)\chi(x), \quad (u\in \C G)
	\end{equation}
	where $\sum_{x^{G}}$ is an abbreviation of $\sum_{x\in T}$ for $T$ a set of representatives of the conjugacy classes of $G$.
	Let $\rho$ be a representation of $G$ affording $\chi$.
	If $z\in\mathbb{C}$ then the multiplicity of $z$ as an eigenvalue of $\rho(u)$ only depends on the character $\chi$ and it is denoted by $\mu(z,u,\chi)$.
	As $u^n=1$, every eigenvalue of $\rho(u)$ is of the form $\zeta_n^l$ for some integer $0\le l \le n-1$, and the following formula gives the multiplicity of $\zeta_n^l$ as an eigenvalue of $\rho(u)$ in terms of the partial augmentations (see \cite{LutharPassi1989}):
	\begin{equation}\label{MultiplicitiesFormula}
	\mu(\zeta_n^{l},u,\chi)=\frac{1}{n}\sum_{x^{G}}\sum_{d|n}\varepsilon_{x}(u^{d})\Tr_{\Q (\zeta_n^{d})/\Q }(\chi(x)\zeta_n^{-ld}).
	\end{equation}
	Observe that the right side of the previous formula makes sense because if $\varepsilon_{x}(u^{d})\ne 0$ then $x^{\frac{n}{d}}=1$, by Theorem~\ref{thm:known facts about partial augmentation}.(\ref{HertweckPA}), and hence $\chi(x)\in \Q (\zeta_n^{d})$.
	Since $u$ and $u'$ have the same distribution of partial augmentations, Formula (\ref{MultiplicitiesFormula}) implies that the images of $u$ and $u'$ by all the representations of $G$ have the same eigenvalues with the same multiplicities. Thus $u$ and $u'$ are conjugate in $\C G$ and hence they are rationally conjugate (see \cite[Lemma~37.5]{SehgalBookUnits}), as desired.
	This serves as a proof of Theorem~\ref{thm:known facts about partial augmentation}.\eqref{thm:ZC1 and augmentations} and also explains why proving the Zassenhaus Conjecture for $G$ is equivalent to prove that $\PA_n(G)=\TPA_n(G)$ for every $n$.
	Clearly,
	$$\TPA_n(G)= \left\{ (\pa_d)_{d\mid n} : \text{there exists } g\in G \text{ with } \pa_d(h) =1
	\text{ if } h\in (g^d)^G \text{ and } \pa_d(h)=0 \text{ otherwise} \right\},$$
	but $\PA_n(G)$ is usually hard to calculate.

	The equality in (\ref{MultiplicitiesFormula}) extends to Brauer characters modulo a prime $p$ not dividing $n$.
	More precisely, let $\rho$ be a representation of $G$ in characteristic $p$ and let $\chi$ be the Brauer character afforded by $\rho$ with respect to a sufficiently large $p$-modular system. 
	Let $u$ be an element of order $n$ in $V(\Z G)$ with $n$ coprime with $p$ and let $\mu(\zeta_{n}^{l},u,\chi)$ denote the multiplicity of  $\overline{\zeta_{n}}^{l}$ as an eigenvalue of
	$\rho(\overline{u})$, where the bar notation stands for reduction modulo $p$.
	This multiplicity can be calculated using (\ref{MultiplicitiesFormula}) (see \cite{HertweckPA}).
	Again the formula makes sense because if $\varepsilon_g(u)\ne 0$ then $g$ is $p$-regular, by Theorem~\ref{thm:known facts about partial augmentation}.(\ref{HertweckPA}).
	
	Formula (\ref{MultiplicitiesFormula}) is the bulk of the HeLP Method.
	Namely, if $u\in V(\Z G)$ satisfies $u^n=1$ and $\chi$ is either an ordinary character of $G$ or a Brauer character of $G$ module a prime not dividing $n$,
	then for every integer $l$
	\begin{equation*}
	\frac{1}{n}\sum_{x^{G}}\sum_{d|n}\varepsilon_{x}(u^{d})\Tr_{\Q (\zeta_n^{d})/\Q }(\chi(x)\zeta_n^{-ld}) \;
	\in \; \Z_{\ge 0}.
	\end{equation*}
	
	Let $n$ be a positive integer.
	A \emph{distribution of virtual partial augmentations} of order $n$ for $G$ is a list $\pa=(\pa_d)_{d\mid n}$,
	indexed by the divisors of $n$, where each $\pa_d$ is a class function of $G$ taking values on $\Z$, and the following conditions hold:
	\begin{itemize}
		\item[(V1)] $\sum_{x^{G}} \pa_d(x)=1$;
		\item[(V2)] if $d\ne n$ then $\pa_d(1)=0$;
		\item[(V3)] if $\frac{n}{d}$ is not multiple of $|x|$ then $\pa_d(x)=0$;
		\item[(V4)] if $\chi$ is either an ordinary character of $G$ or a Brauer character of $G$ modulo a prime not dividing
		$n$ and $l\in \Z$  then
		$$\mu(\zeta_n^l,\pa,\chi)=\frac{1}{n}\sum_{x^{G}}\sum_{d|n} \pa_d(x)\Tr_{\Q (\zeta_n^{d})/\Q }(\chi(x)\zeta_n^{-ld})\; \in\; \Z_{\ge 0}.$$
	\end{itemize}
	As for (\ref{MultiplicitiesFormula}), the right side of the previous formula makes sense because, by (V3), if $\pa_d(x)\ne 0$ then the order of $x$ divides $\frac{n}{d}$ and hence $x$ is $p$-regular and $\chi(x) \in \Q(\zeta_n^d)$.
	Let 
	$$\VPA_n(G) = \{\text{Distributions of virtual partial augmentations of order } n \text{ for } G\}.$$
	We have 
	$$\TPA_n(G) \subseteq \PA_n(G) \subseteq \VPA_n(G).$$
	Indeed, the first inclusion is obvious and the second one follows from the Bergman-Higman Theorem, Theorem~\ref{thm:known
		facts about partial augmentation}.(\ref{HertweckPA}) and formula (\ref{MultiplicitiesFormula}).
	
	Let $G=\PSL(2,q)$ with $q$ an odd prime power and let $t$ be an odd prime.
	By Theorem~\ref{thm:ConocidoPSL(2,p)}.(\ref{Spectrum}), $V(\Z G)$ has elements of order $2t$ if and only if so does $G$ if and only if
	$q\equiv \pm 1 \bmod 4t$. Thus we assume that $q\equiv \pm 1 \bmod 4t$.
	For $g_0\in G$ with $|g_0|=2t$ and $t\ge 5$ let  $\pa^{(g_0)}$ denote the list of class functions $(\pa^{(g_0)}_1,\pa^{(g_0)}_2,\pa^{(g_0)}_t,\pa^{(g_0)}_{2t})$ of $G$
	defined as follows:
	\begin{equation}\label{VPASpecial}
	\pa^{(g_0)}_d(g) = \begin{cases}
	1, & \text{if } (d,g^G) \in \left\{(2t,1^G), (t,(g_0^t)^G), (2,(g_0^2)^G),
	(1,(g_0^{\frac{t-1}{2}})^G), (1,(g_0^{\frac{t+1}{2}})^G)\right\}; \\
	-1, & \text{if } (d,g^G)=(1,(g_0^{t-1})^G); \\
	0, & \text{otherwise}.
	\end{cases}
	\end{equation}
	As explained in the introduction we calculate $\VPA_{2t}(G)$, namely we prove the following.
	
	\begin{theorem}\label{main}
		Let $t$ be an odd prime and let $q$ be a prime power such that $q \equiv \pm 1 \mod 4t$ and let $G=\PSL(2,q)$.
		Then $\VPA_{6}(G)=\TPA_6(G)$ and if $t\ge 5$ then
		$$\VPA_{2t}(G)=\TPA_{2t}(G)\cup \{\pa^{(g_0)} : g_0\in G, |g_0|=2t\}.$$
	\end{theorem}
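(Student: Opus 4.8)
The plan is to compute $\VPA_{2t}(G)$ by first exploiting the support and normalisation conditions (V1)--(V3) to reduce a distribution $\pa=(\pa_1,\pa_2,\pa_t,\pa_{2t})$ to a single genuinely free component, and then to use the eigenvalue--multiplicity conditions (V4) to determine that component. Since the divisors of $n=2t$ are $1,2,t,2t$, condition (V3) forces $\pa_{2t}$ to be supported on the identity, $\pa_t$ on involutions, $\pa_2$ on elements of order $t$, and $\pa_1$ on elements of orders $2$, $t$ and $2t$; together with (V1) and (V2) this already gives $\pa_{2t}=\delta_{1^G}$. Because $q$ is odd, $\PSL(2,q)$ has a single conjugacy class of involutions, so $\pa_t=\delta_{s^G}$ for $s$ an involution, reproducing the value of $\pa^{(g_0)}$ at $d=t$. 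For $\pa_2$ I would pass to the \emph{even part} $(\pa_2,\pa_{2t})$, indexed by the divisors of $t$: the non-negativity and support conditions for $\pa$ at $n=2t$ restrict to the corresponding conditions at $n=t$, so $(\pa_2,\pa_{2t})$ is a distribution of virtual partial augmentations of order $t$. As $t$ is a prime different from $p$, the HeLP analysis underlying Theorem~\ref{thm:ConocidoPSL(2,p)}.(2) gives $\VPA_t(G)=\TPA_t(G)$, whence $\pa_2$ is concentrated on a single class $(g_0^2)^G$ of order-$t$ elements. Thus only $\pa_1$ remains undetermined, and the whole problem reduces to finding all admissible $\pa_1$ once $g_0$ (equivalently its maximal torus) has been fixed.

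Next I would invoke the accumulated virtual partial augmentations $\widetilde{\pa}_d(m)=\sum_{|g|=m}\pa_d(g)$ computed in Section~\ref{SectionAccumulated}. The ordinary irreducible characters of $\PSL(2,q)$---the Steinberg character of degree $q$, the two families of degrees $q\pm1$, and the two characters of degree $(q\pm1)/2$---and the Brauer characters modulo $p$ take values on the semisimple classes that are governed by the underlying torus, so feeding them into (V4) and summing over all classes of a fixed order produces clean relations among $\widetilde{\pa}_1(2)$, $\widetilde{\pa}_1(t)$ and $\widetilde{\pa}_1(2t)$. Combined with (V1) these should force the order profile $\widetilde{\pa}_1(2)=0$, $\widetilde{\pa}_1(t)=0$, $\widetilde{\pa}_1(2t)=1$. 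It is worth stressing that this profile coincides with that of a genuine element of $\TPA_{2t}(G)$ and also with that of each $\pa^{(g_0)}$ (in the latter the two order-$t$ contributions $+1$ and $-1$ cancel in the accumulated sum); hence the accumulated data alone does not separate the two families, and the real work lies in distributing the mass $1$ on the order-$2t$ classes and the opposite masses on the order-$t$ classes among individual conjugacy classes.

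To carry out this refinement I would use the characters that separate the classes inside a fixed torus, namely the degree-$(q\pm1)$ series indexed by the non-trivial linear characters $\theta$ of the cyclic torus containing $g_0$ (the split torus of order $(q-1)/2$ when $q\equiv1 \bmod 4t$ and the non-split torus of order $(q+1)/2$ when $q\equiv-1 \bmod 4t$). On such a torus these characters act as $\theta(g)+\theta(g^{-1})$, so letting $\theta$ run produces a Vandermonde/Fourier system whose transform is exactly $\pa_1$ restricted to the torus. Substituting into (V4) turns the non-negativity and integrality of the multiplicities $\mu(\zeta_{2t}^l,\pa,\chi)$ into a rigid set of constraints on the values $\pa_1(g_0^k)$. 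The expected dichotomy is that either all the mass concentrates on one order-$2t$ class, giving an element of $\TPA_{2t}(G)$, or, when $t\ge5$, the constraints are satisfied by the single alternative configuration $\pa_1=\delta_{(g_0^{(t+1)/2})^G}+\delta_{(g_0^{(t-1)/2})^G}-\delta_{(g_0^{t-1})^G}$ (one class of order $2t$ and two of order $t$), which together with the already-fixed $\pa_2,\pa_t,\pa_{2t}$ is precisely $\pa^{(g_0)}$. Checking that each $\pa^{(g_0)}$ really lies in $\VPA_{2t}(G)$ is the forward, computational direction: one verifies directly, from the explicit character tables, that every multiplicity in (V4) is a non-negative integer, which reduces to a trace identity for $\zeta_{2t}$ reflecting the relation $\frac{t-1}{2}+\frac{t+1}{2}=t$ among the exponents.

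The main obstacle is the converse inclusion: proving that the constraints coming from all ordinary and $p$-Brauer characters, taken simultaneously, admit no admissible $\pa_1$ beyond these two possibilities. This is the technical heart of Section~\ref{SectionProof}; the delicate point is that the cancelling sign pattern $+1,+1,-1$ must be shown to be the unique non-$\TPA$ solution surviving every eigenvalue-multiplicity inequality, which amounts to a tight integer analysis of the associated Fourier coefficients rather than a single clever estimate, and the two torus types $q\equiv\pm1 \bmod 4t$ have to be treated in parallel. Finally, the case $n=6$ (that is, $t=3$) is settled by a degeneration argument: here $\frac{t-1}{2}=1$, $\frac{t+1}{2}=2$ and $t-1=2$ coincide, so in $\pa^{(g_0)}$ the class $(g_0^2)^G$ receives $+1-1=0$ and $\pa_1$ collapses to $\delta_{g_0^G}$; every would-be exceptional distribution thus coincides with the $\TPA_6(G)$ element attached to $g_0$, giving $\VPA_6(G)=\TPA_6(G)$.
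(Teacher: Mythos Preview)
Your structural outline---pinning down $\pa_{2t},\pa_t,\pa_2$ first via Corollary~\ref{paPrimo}, then invoking Proposition~\ref{accumulated} for the accumulated values, and finally determining $\pa_1$ from (V4)---matches the paper exactly. The divergence is in the tool you choose for the last and decisive step.

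You propose to use the ordinary characters of degree $q\pm 1$ (the family $\phi_h$) and a ``Vandermonde/Fourier'' argument. The paper does \emph{not} do this: it applies (V4) to the $p$-Brauer characters $\chi_2$ and $\chi_4$, of degrees $3$ and $5$. This is not a cosmetic difference. Because the sum over $l$ of $\mu(\zeta_{2t}^l,\pa,\chi)$ equals $\chi(1)$, the non-negativity constraints are tight only when $\chi(1)$ is small relative to $2t$. For $\phi_h$ one has $\phi_h(1)=q+\epsilon\ge 4t$, so the dominant term $\phi_h(1)/(2t)$ swamps the $\pa_1$-dependent corrections and non-negativity is essentially automatic; your Fourier inversion recovers the values $\pa_1(g_0^k)$ only if you already know all the $\mu$'s, not merely that they are non-negative. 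By contrast $\chi_2(1)=3$ and $\chi_4(1)=5$ force the multiplicities to be $0$ or $1$ almost everywhere, and the paper obtains directly the two-line constraints \eqref{MuSimpleChi_2}--\eqref{MuSimpleChi_4}, from which the dichotomy $\pa_1\in\{\varepsilon^*_1,\pa^{(g_0)}_1\}$ falls out in a few lines. Your plan, as written, does not supply a substitute for this tightness and would not pin $\pa_1$ down.

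Your treatment of $t=3$ also has a logical gap. Observing that the formula for $\pa^{(g_0)}$ degenerates to an element of $\TPA_6(G)$ shows only that the \emph{putative} exceptional solution is trivial; it does not by itself prove $\VPA_6(G)\subseteq\TPA_6(G)$, because your refinement argument for $\pa_1$ was carried out (and in the paper is only valid) under the hypothesis $t\ge 5$ (it uses $n\not\equiv\pm 1\bmod t$). The paper handles $t=3$ independently and more simply: $G$ has a single conjugacy class of order~$3$ and a single one of order~$6$, so Proposition~\ref{accumulated} alone already determines $\pa_1$.

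For the forward direction (showing each $\pa^{(g_0)}$ lies in $\VPA_{2t}(G)$), the paper reduces via Remark~\ref{ReductionBrauerp} to irreducible $p$-Brauer characters, writes these as $\Z$-combinations of $1_G,\phi_h,\psi_h$ using Lemma~\ref{LemmaCharacters}.(\ref{ChisInPhisPsis}), and checks the resulting multiplicities are non-negative integers. Your one-line description of this step is correct in spirit but conflates it with the converse; the two directions genuinely use different pieces of the character theory.
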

	
	Suppose that $t\ge 5$ and let $g_0$ be an element of order $2t$ in $G=\PSL(2,q)$.
	Then $\pa^{(g_0)}$ is the distribution of partial augmentations of the elements of a conjugacy class $C$ in
	the units of $\Q G$ of an element of order $2t$ in $V(\Q G)$ with integral partial augmentations.
	To settle the Zassenhaus Conjecture in this case it remains to decide whether $C$ contains an element $u$ with integral coefficients. If not, the Zassenhaus Conjecture holds in this case and otherwise $u$ provides a counterexample for the Zassenhaus Conjecture. The smallest example of this situation is encountered for $q=19$ and $t=5$. However, B\"{a}chle and Margolis has proved the Zassenhaus Conjecture for this example with a technique which they called the Lattice Method \cite{BachleMargolis2014}. Unfortunately, the Lattice Method does not apply for the next cases ($q=27$ and $q=29$ and $t=7$) basically because the representation type appearing in these cases is wild.

	\section{Preliminary general results}\label{SectionVirtual}
	
	In this section we collect some technical results that will be used in subsequent sections.
	We start quoting:
	
	\begin{lemma}\cite[Lemma~2.1]{Margolis2016}
		\label{lema Margolis}
		If $n$ and $d$ are positive integers with $d\mid n$ then
		$\Tr_{\Q (\zeta_{n})/\Q }(\zeta_{d})=\mu(d)\frac{\varphi(n)}{\varphi(d)}$,
		where $\varphi$ denotes the Euler's totient function and $\mu$ denotes the M\"{o}bius function.
	\end{lemma}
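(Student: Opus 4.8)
The plan is to evaluate the trace directly as a sum over the Galois group and then reduce to the classical identity expressing the sum of primitive roots of unity in terms of the M\"{o}bius function. First I would use that $\Gal(\Q(\zeta_n)/\Q)\cong (\Z/n\Z)^*$, where the residue class of an integer $a$ coprime to $n$ corresponds to the automorphism $\sigma_a$ determined by $\sigma_a(\zeta_n)=\zeta_n^a$. Since $d\mid n$ we may take $\zeta_d=\zeta_n^{n/d}$, and then $\sigma_a(\zeta_d)=\zeta_n^{an/d}=\zeta_d^a$. Hence
\begin{equation*}
\Tr_{\Q(\zeta_n)/\Q}(\zeta_d)=\sum_{\sigma\in\Gal(\Q(\zeta_n)/\Q)}\sigma(\zeta_d)=\sum_{a\in(\Z/n\Z)^*}\zeta_d^a.
\end{equation*}

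Next I would note that $\zeta_d^a$ depends only on the residue of $a$ modulo $d$, so the sum can be grouped according to the image of $a$ under the reduction homomorphism $\pi\colon(\Z/n\Z)^*\to(\Z/d\Z)^*$. This map is surjective: decomposing $n$ and $d$ into prime powers and using the Chinese Remainder Theorem reduces the claim to the surjectivity of $(\Z/p^a\Z)^*\to(\Z/p^b\Z)^*$ for $b\le a$, which is standard. A surjective homomorphism of finite abelian groups has all fibers of the same size, namely $\varphi(n)/\varphi(d)$ here. Grouping the sum accordingly gives
\begin{equation*}
\Tr_{\Q(\zeta_n)/\Q}(\zeta_d)=\frac{\varphi(n)}{\varphi(d)}\sum_{b\in(\Z/d\Z)^*}\zeta_d^b.
\end{equation*}

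It remains to evaluate $S(d):=\sum_{b\in(\Z/d\Z)^*}\zeta_d^b$, the sum of the primitive $d$-th roots of unity. The last step is the classical identity $S(d)=\mu(d)$, which I would prove by M\"{o}bius inversion. Since every $d$-th root of unity is a primitive $e$-th root of unity for a unique divisor $e$ of $d$, grouping all $d$-th roots of unity by their exact order yields $\sum_{e\mid d}S(e)=\sum_{\zeta^d=1}\zeta$, and this last sum equals $1$ when $d=1$ and $0$ when $d>1$. Denoting the right-hand side by $F(d)$, M\"{o}bius inversion gives $S(d)=\sum_{e\mid d}\mu(d/e)F(e)=\mu(d)$, since only the term $e=1$ survives. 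Substituting back produces $\Tr_{\Q(\zeta_n)/\Q}(\zeta_d)=\mu(d)\frac{\varphi(n)}{\varphi(d)}$, as claimed.

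The argument is entirely routine, and the only step needing a moment's care is the surjectivity of the reduction map $(\Z/n\Z)^*\to(\Z/d\Z)^*$ together with the resulting constant fiber size $\varphi(n)/\varphi(d)$; this is precisely what legitimizes the regrouping in the second step and converts the trace over $\Q(\zeta_n)$ into a multiple of the trace over $\Q(\zeta_d)$.
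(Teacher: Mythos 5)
Your proof is correct, but there is nothing in the paper to compare it against: the lemma is quoted verbatim from \cite[Lemma~2.1]{Margolis2016} and the paper supplies no proof of its own. Your argument is the standard one, and every step checks out, including the surjectivity of the reduction map $(\Z/n\Z)^{*}\rightarrow(\Z/d\Z)^{*}$ and the M\"{o}bius-inversion computation of the sum of primitive $d$-th roots of unity. One remark: the middle step of your proof, regrouping the Galois sum over the fibers of the reduction map, is exactly the explicit form of the transitivity of the trace. Since $\zeta_d\in\Q(\zeta_d)$, one has
\begin{equation*}
\Tr_{\Q(\zeta_n)/\Q}(\zeta_d)=\Tr_{\Q(\zeta_d)/\Q}\left(\Tr_{\Q(\zeta_n)/\Q(\zeta_d)}(\zeta_d)\right)=[\Q(\zeta_n):\Q(\zeta_d)]\,\Tr_{\Q(\zeta_d)/\Q}(\zeta_d)=\frac{\varphi(n)}{\varphi(d)}\,\mu(d),
\end{equation*}
which delivers the factor $\varphi(n)/\varphi(d)$ immediately and lets you skip both the surjectivity argument and the constant-fiber-size count; the only remaining ingredient is the classical identity $\Tr_{\Q(\zeta_d)/\Q}(\zeta_d)=\mu(d)$, i.e.\ that the primitive $d$-th roots of unity sum to $\mu(d)$, which you proved correctly. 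So your route is a fully worked-out, more elementary version of the same computation, at the cost of a little extra group theory.
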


	The following well known formula, for $k$ and $d$ integers with $k>0$, will be used in several situations:
	\begin{equation}\label{SumaPotencias}
	\sum_{i=0}^{k-1} \zeta_k^{-id} = \begin{cases} 0, & \text{if } k\nmid d; \\ k, & \text{otherwise.} \end{cases}
	\end{equation}
	
	In the remainder of the section $G$ is a finite group, $n$ is a positive integer and $\pa\in\VPA_n(G)$.
	If $m$ divides $n$ then we define
	$$\pa^{\frac{n}{m}}=((\pa^{\frac{n}{m}})_d)_{d\mid m} \quad \text{with} \quad (\pa^{\frac{n}{m}})_d(g)=\pa_{d\frac{n}{m}}(g)\quad
	\text{for}\quad g\in G.$$
	Observe that if $\pa$ is the distribution of partial augmentations of an element $u\in V(\Z G)$ of order $n$ then $\pa^{\frac{n}{m}}$ is the distribution of partial augmentations of $u^{\frac{n}{m}}$.
	Moreover,
	\begin{equation}\label{pa/m}
	\text{if }\pa\in \VPA_n(G) \text{ and } m\mid n \text{ then } \pa^{\frac{n}{m}}\in \VPA_m(G).
	\end{equation}
	Indeed, that $\pa^{\frac{n}{m}}$ satisfies (V1), (V2) and (V3) is elementary and (V4) follows from the following lemma.
	
	\begin{lemma}\label{PADivided}
		Let $G$ be a finite group. Let $n$ and $m$ be positive integers with $m\mid n$, let $l\in \Z$ and let $\pa \in \VPA_n(G)$.  Let $\chi$ be either an ordinary character of $G$ or a Brauer character of $G$ module a prime not dividing $n$.  Then
		$$\mu(\zeta_m^l,\pa^{\frac{n}{m}},\chi)=\sum_{\xi,\xi^{\frac{n}{m}}=\zeta_m^l}\mu(\xi,\pa,\chi).$$
	\end{lemma}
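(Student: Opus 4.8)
The plan is to expand both sides via condition (V4) and to show that the sum over $\xi$ on the right collapses, through the geometric-sum identity \eqref{SumaPotencias}, onto exactly the divisors and summands defining the left-hand side. Throughout I would fix the compatible choice $\zeta_m=\zeta_n^{n/m}$ of primitive roots of unity, so that the two instances of (V4) (one at level $n$, one at level $m$) can be compared term by term; this choice is harmless since the multiplicity depends only on the complex number at which it is evaluated.

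First I would rewrite the left-hand side. Applying (V4) to the order-$m$ distribution $\pa^{n/m}$ and using $(\pa^{n/m})_d=\pa_{dn/m}$ gives
$$\mu(\zeta_m^l,\pa^{n/m},\chi)=\frac{1}{m}\sum_{x^G}\sum_{d\mid m}\pa_{dn/m}(x)\,\Tr_{\Q(\zeta_m^d)/\Q}(\chi(x)\zeta_m^{-ld}).$$
Re-indexing by $e=dn/m$ replaces the sum over divisors $d$ of $m$ by the sum over divisors $e$ of $n$ that are multiples of $n/m$, and since $\zeta_m^d=\zeta_n^e$ (so $\Q(\zeta_m^d)=\Q(\zeta_n^e)$) and $\zeta_m^{-ld}=\zeta_n^{-le}$, the left side becomes the level-$n$ summand $\pa_e(x)\,\Tr_{\Q(\zeta_n^e)/\Q}(\chi(x)\zeta_n^{-le})$ summed over $x$ and over divisors $e$ of $n$ with $(n/m)\mid e$, with overall prefactor $1/m$.

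Next I would expand the right-hand side. The $\xi$ with $\xi^{n/m}=\zeta_m^l$ are automatically $n$-th roots of unity, namely the $\zeta_n^k$ with $k\equiv l\pmod m$, and there are $n/m$ of them. Summing the level-$n$ formula over these $\xi$ and interchanging the orders of summation, the crucial quantity to evaluate for each divisor $d$ of $n$ is the inner sum $S=\sum_{k\equiv l\,(m)}\zeta_n^{-kd}$. Writing $k=l+jm$ with $j=0,\dots,\tfrac{n}{m}-1$ factors this as $S=\zeta_n^{-ld}\sum_{j}\zeta_{n/m}^{-jd}$, and \eqref{SumaPotencias} with modulus $n/m$ shows $S=0$ unless $(n/m)\mid d$, in which case $S=\tfrac{n}{m}\,\zeta_n^{-ld}$. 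Substituting this back, the factor $n/m$ turns the prefactor $1/n$ into $1/m$, the condition $(n/m)\mid d$ cuts the divisor sum down to exactly the divisors appearing on the left, and the surviving summand is $\pa_d(x)\,\Tr_{\Q(\zeta_n^d)/\Q}(\chi(x)\zeta_n^{-ld})$; renaming $d$ as $e$ recovers the left-hand expression.

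The argument is mostly bookkeeping once $\zeta_m=\zeta_n^{n/m}$ is fixed, and the single genuine computation is the evaluation of $S$, which simultaneously produces the divisibility restriction and the compensating factor $n/m$. The point needing care — the closest thing to an obstacle — is the legitimacy of moving the finite sum over $k$ inside the $\Q$-linear trace: one must know that $\chi(x)\in\Q(\zeta_n^d)=\Q(\zeta_{n/d})$ whenever the coefficient $\pa_d(x)$ is nonzero, which is guaranteed by (V3) (forcing $|x|\mid n/d$), uniformly for ordinary characters and for Brauer characters modulo a prime not dividing $n$ (where $x$ is then $p$-regular, as already observed in the definition of $\VPA_n(G)$).
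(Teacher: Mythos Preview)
Your proof is correct and follows essentially the same approach as the paper: expand the right-hand side via (V4), use the geometric-sum identity \eqref{SumaPotencias} to kill all divisors $d$ not divisible by $n/m$ while producing the compensating factor $n/m$, and then re-index to recognize the left-hand side. The only cosmetic difference is that the paper parametrizes the $\xi$ with $\xi^{n/m}=\zeta_m^l$ as $\xi_0\zeta_{n/m}^i$ for a fixed $\xi_0$ and $0\le i<n/m$, whereas you parametrize them as $\zeta_n^k$ with $k\equiv l\pmod m$; the ensuing computations are identical.
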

	
	\begin{proof}
		Let $k=\frac{n}{m}$ and fix $\xi_0 \in \C$ with $\xi_0^k=\zeta_m^l$. Then $\xi^k=\zeta_m^l$ if and only if $(\xi\xi_0^{-1})^k=1$ if and only if $\xi=\xi_0 \zeta_k^i$ for some $i\in \{0,1,\dots,k-1\}$.
		Then
		\begin{eqnarray*}
			\sum_{\xi,\xi^k=\zeta_m^l} \mu(\xi,\pa,\chi) &=&
			\frac{1}{n} \sum_{x^G} \sum_{d\mid n} \pa_d(x)\Tr_{\Q(\zeta_n^d)/\Q}\left(\chi(x)\xi_0^{-d}\sum_{i=0}^{k-1} \zeta_k^{-id}\right) \\
			&=&
			\frac{1}{m} \sum_{x^G} \sum_{d\mid n,k\mid d} \pa_d(x)\Tr_{\Q(\zeta_n^d)/\Q}(\chi(x)\xi_0^{-d}),
		\end{eqnarray*}
		where the last equality is a consequence of (\ref{SumaPotencias}).
		Furthermore $\{d:d\mid n, k\mid d\} = \{kd_{1} : d_{1}\mid m\}$ and $\zeta_n^k$ has order $m$.
		Thus
		$$
		\sum_{\xi,\xi^k=\zeta_m^l} \mu(\xi,\pa,\chi) =
		\frac{1}{m} \sum_{x^G} \sum_{d_{1}\mid m} (\pa^{\frac{n}{m}})_{d_1}(x)\Tr_{\Q(\zeta_{m}^{d_1})/\Q}(\chi(x)\zeta_{m}^{-ld_1})
		= \mu(\zeta_m^l,\pa^{\frac{n}{m}},\chi).
		$$
	\end{proof}
	
	\begin{proposition}\label{PANonEmptyDivides}
		Let $G$ be a finite group and let $n$ be a positive integer.
		If $\VPA_n(G)\ne \emptyset$ then every prime divisor of $n$ divides $|G|$.
	\end{proposition}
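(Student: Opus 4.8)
The plan is to establish the equivalent contrapositive statement: if $p$ is a prime dividing $n$ with $p \nmid |G|$, then $\VPA_n(G) = \emptyset$. Since the obstruction concerns only the single prime $p$, I would first localize at $p$ by means of the restriction operation. Namely, taking $m = p$ in (\ref{pa/m}), every $\pa \in \VPA_n(G)$ produces an element $\pa^{n/p} \in \VPA_p(G)$; hence it suffices to show $\VPA_p(G) = \emptyset$, for then $\VPA_n(G)$ can contain no element at all. A pleasant feature of this route is that one never has to invoke the multiplicity condition (V4): the contradiction will come purely from the support conditions (V1), (V2) and (V3).

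So I would suppose for contradiction that $\pa = (\pa_1, \pa_p) \in \VPA_p(G)$. Because $p \nmid |G|$, Lagrange's theorem guarantees that $G$ has no element of order $p$, so $|x| \ne p$ for every $x \in G$. Then I would examine the component $\pa_1$, where $d = 1$ and $\frac{n}{d} = p$. Condition (V3) forces $\pa_1(x) = 0$ unless $|x|$ divides $p$; as $p$ is prime this leaves only $|x| \in \{1, p\}$, and $|x| = p$ is impossible, so $\pa_1$ is supported at the identity alone. But (V2) applies since $1 \ne p$, giving $\pa_1(1) = 0$, whence $\pa_1 \equiv 0$. This contradicts (V1), which requires $\sum_{x^{G}} \pa_1(x) = 1$, and completes the argument.

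The proof is elementary and I anticipate no genuine obstacle; the only step that demands a little care is the initial reduction, where one must be sure that restricting to the bottom layer $\pa^{n/p}$ really lands in $\VPA_p(G)$, and this is exactly what (\ref{pa/m}) supplies.
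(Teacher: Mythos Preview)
Your proof is correct and follows essentially the same approach as the paper: reduce to the case $n=p$ via (\ref{pa/m}), then derive a contradiction from (V1), (V2), (V3) alone by showing $\pa_1\equiv 0$. The only cosmetic difference is the order of presentation (the paper first establishes $\VPA_p(G)=\emptyset$ and then applies the restriction, whereas you restrict first), but the content is identical.
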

	\begin{proof}
		Let $p$ be a prime not dividing $|G|$ and let $\pa\in \VPA_p(G)$.
		By (V2) and (V3), $\pa_1(g)=0$ for every $g\in G$ and this is in contradiction with (V1).
		This shows that if $p$ does not divides the order of $G$ then $\VPA_p(G)=\emptyset$.
		Now, if $\pa\in \VPA_n(G)$, with $p$ a prime divisor of $n$ then $\pa^{\frac{n}{p}}\in \VPA_p(G)$ by \eqref{pa/m}, and hence $p$ divides the order of $G$, by the previous sentence.
	\end{proof}

	Let $\chi$ be either an ordinary character of $G$ or a Brauer character of $G$ module a prime $p$ not dividing $n$ and let $m$ be a divisor of $n$.
	Let
	$$\mu_m^-(\pa,\chi) = \frac{1}{n}\sum_{x^{G}}\sum_{d\mid n,m\mid d}\pa_d(x)\Tr_{\Q (\zeta_n^{d})/\Q }(\chi(x)).$$
	By (V4), for every $l\in \Z$, we have
	$$
	0 \le \mu(\zeta_m^l,\pa,\chi) =
	\mu_m^-(\pa,\chi) + \frac{1}{n}\sum_{x^{G}}\sum_{d\mid n,m\nmid d}\pa_d(x)\Tr_{\Q (\zeta_n^{d})/\Q }(\chi(x)\zeta_m^{-dl}).
	$$
	Combining this with (\ref{SumaPotencias}) we obtain
	\begin{eqnarray*}
		0 & \le & \mu(1,\pa,\chi) = \mu_m^-(\pa,\chi) + \frac{1}{n}\sum_{x^{G}}\sum_{d\mid n,m\nmid d}\pa_d(x)\Tr_{\Q (\zeta_n^{d})/\Q }(\chi(x)) \\
		&=&
		\mu_m^-(\pa,\chi) - \frac{1}{n}\sum_{l=1}^{m-1}\sum_{x^{G}}\sum_{d\mid n,m\nmid d}\pa_d(x)\Tr_{\Q (\zeta_n^{d})/\Q }(\chi(x)\zeta_m^{-dl}) \le m \mu_m^-(\pa,\chi).
	\end{eqnarray*}
	We record this for future use:
	\begin{equation}\label{mu0}
	0\leq\mu(1,\pa,\chi) \leq m\mu_{m}^{-}(\pa,\chi).
	\end{equation}
	
	\section{Preliminary results for PSL$(2,q)$}\label{SectionAccumulated}

	In this section $p$ is an odd prime, $q=p^f$ with $f\ge 1$ and $G=\PSL(2,q)$.
	In the first part of the section we describe the ordinary and Brauer characters of $G$ which will be used in the remainder of the paper.
	In the second part we first describe $\VPA_{r}(G)$ with $r$ a prime different from $p$ and then we calculate the accumulated virtual partial augmentations of elements in $\VPA_{rt}(G)$ with $r$ and $t$ different primes such that $q\equiv \pm 1 \bmod 2rt$.
	
	Suppose that $G$ has an element $g_0$ of order $m$ with $p\nmid m$.
	Then $q\equiv \epsilon \bmod 2m$ with $\epsilon=\pm 1$.
	Moreover, every element of $G$ of order dividing $m$ is conjugate to some element of $\GEN{g_0}$ and $g_0^i$ and $g_0^j$ are conjugate in $G$ if and only if $i\equiv \pm j \bmod m$.
	
	If $h$ is an integer then let $\phi_h,\psi_h:\GEN{g_0}\rightarrow \C$ be defined as follows:
	$$\phi_h(g_0^i)=\begin{cases}
	q+\epsilon, & \text{if } m \mid i; \\ \epsilon(\zeta_{m}^{hi}+\zeta_{m}^{-hi}), & \text{otherwise;}
	\end{cases} \quad \quad
	\psi_h(g_0^i)=\begin{cases}
	q-\epsilon, & \text{if } m \mid i; \\ 0, & \text{otherwise.}
	\end{cases}$$
	Given $R=(r_0,r_1,\dots,r_k)\in \Z^{k+1}$, let
	\begin{equation*}
	X_{R}=\{(s_0,s_1,\dots,s_k)\in \Z^{k+1} : -r_j \le s_j \le r_j \text{ and } 2\mid r_j - s_j \text{ for each } j\}.
	\end{equation*}
	If moreover $r_0+\dots+r_k$ is even then let
	\begin{equation}\label{Vrh}
	V_{R;h}=\left\{(s_0,s_1,\dots,s_k)\in X_{R}\setminus \{(0,\dots,0)\} : \frac{\sum_{j=0}^k s_jp^j}{2} \equiv \pm h \bmod m\right\}
	\end{equation}
	and let $\chi_{r_0,r_1,\dots,r_k}:\GEN{g_0}\rightarrow \C$ be defined by
	$$\chi_{r_0,r_1,\dots,r_k}(g_0^i) =
	\sum_{(s_0,s_1,\dots,s_k)\in X_{R}} \zeta_{m}^{i \frac{\sum_{j=0}^k s_jp^j}{2}}.$$
	
	\begin{lemma}\label{LemmaCharacters}
		Let $g_0$ be a $p$-regular element of $G$ of order $m$.
		Then the following statements hold for $s\in \Z$ and $R=(r_0,r_1,\dots,r_k)\in \Z^{k+1}$ with $r_0+r_1+\dots+r_k$ even:
		\begin{enumerate}
			\item\label{PhiPsi} If $m\nmid s$ then both $\phi_s$ and $\psi_s$ are the restriction to
			$\GEN{g_0}$ of ordinary characters of $G$.
			\item\label{Chis} $\chi_{r_0,r_1,\dots,r_k}$ is the restriction to $\GEN{g_0}$ of a Brauer character of $G$ modulo $p$.
			\item\label{ChisIrreducible} If $\chi$ is an irreducible Brauer character of $G$ modulo $p$ then the restriction of $\chi$ to $\GEN{g_0}$ equals
			$\chi_{r_0,r_1,\dots,r_{f-1}}$ for some integers $0\le r_0,r_1,\dots,r_{f-1} \le p-1$ with
			$r_0+r_1+\dots+r_{f-1}$ even.
			\item\label{ChisInPhisPsis} 
			$$\chi_{r_0,r_1,\dots,r_k}=\begin{cases}
			(1+2n_0) 1_G + \epsilon \sum_{h=1}^{\frac{m}{2}} n_h \left( \phi_h-\psi_h\right), & \text{if } 2 \mid r_j \text{ for all } j; \\
			2n_0 1_G + \epsilon \sum_{h=1}^{\frac{m}{2}} n_h \left( \phi_h-\psi_h\right), & \text{otherwise};\end{cases}$$
			where $2n_h=|V_{R;h}|$ and $1_G$ denotes the trivial character of $G$.
		\end{enumerate}
	\end{lemma}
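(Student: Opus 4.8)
The plan is to realize every function in the statement as the restriction to $\GEN{g_0}$ of a genuine (Brauer) character, and then to reduce part (\ref{ChisInPhisPsis}) to a combinatorial bookkeeping over the weight vectors of tensor products of symmetric powers. First I would fix the geometry. Since $g_0$ is $p$-regular of order $m$ with $q\equiv \epsilon \bmod 2m$, it lies in a cyclic maximal torus of $G$: the image of the split torus (of order $\frac{q-1}{2}$) when $\epsilon=1$, and of the nonsplit torus (of order $\frac{q+1}{2}$) when $\epsilon=-1$. A preimage of $g_0$ in $\SL(2,q)$ is semisimple with eigenvalues $\omega,\omega^{-1}\in\overline{\mathbb{F}}_q$, where $\omega$ has order $2m$; under the Brauer lift (and the corresponding identification for ordinary characters) the eigenvalues of $g_0^i$ become $\zeta_{2m}^{\pm i}$. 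The crucial normalization is that a module whose restriction to this torus carries a weight-$w$ eigenvector contributes $\zeta_{2m}^{iw}=\zeta_m^{iw/2}$ to the value at $g_0^i$; this is an integral power of $\zeta_m$ exactly when $w$ is even, which is why the exponent $\frac{\sum_j s_j p^j}{2}$ in the definition of $\chi_{r_0,\dots,r_k}$ is meaningful once $r_0+\dots+r_k$ (hence $\sum_j s_j p^j$, as $p$ is odd and $s_j\equiv r_j \bmod 2$) is even.

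For part (\ref{PhiPsi}) I would read off the ordinary character table of $\PSL(2,q)$: the principal and discrete series characters have degrees $q+\epsilon$ and $q-\epsilon$, and their values on $g_0^i$ with $m\nmid i$ are precisely $\epsilon(\zeta_m^{hi}+\zeta_m^{-hi})$ and $0$, which identifies $\phi_s$ and $\psi_s$ as restrictions of genuine ordinary characters whenever $m\nmid s$. For parts (\ref{Chis}) and (\ref{ChisIrreducible}) I would invoke the representation theory of $\SL(2,q)$ in defining characteristic. The symmetric power $\Sym^{r}V$ of the natural module $V$ is an honest module for every $r\ge 0$, with Brauer character $\lambda\mapsto\sum_{s}\lambda^{s}$, the sum over $s$ with $|s|\le r$ and $s\equiv r \bmod 2$, on a semisimple element with eigenvalues $\lambda^{\pm 1}$. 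Hence $\bigotimes_{j=0}^{k}(\Sym^{r_j}V)^{[p^j]}$ is an honest $G$-module whose Brauer character, evaluated on $g_0^i$ via the weight bookkeeping above, is exactly $\sum_{(s_j)\in X_R}\zeta_m^{i\sum_j s_j p^j/2}=\chi_{r_0,\dots,r_k}(g_0^i)$, proving (\ref{Chis}). For (\ref{ChisIrreducible}), Steinberg's tensor product theorem gives that the irreducible $p$-modular modules are the $\bigotimes_j(\Sym^{r_j}V)^{[p^j]}$ with $0\le r_j\le p-1$; since $-I$ acts on such a module by $(-1)^{\sum_j r_j}$, it descends to $\PSL(2,q)$ exactly when $\sum_j r_j$ is even, and its restriction to $\GEN{g_0}$ is the corresponding $\chi_{r_0,\dots,r_{f-1}}$.

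The core of the lemma is part (\ref{ChisInPhisPsis}), which I would prove by directly grouping the weight vectors. I would first record that $\epsilon(\phi_h-\psi_h)$ restricts to $\GEN{g_0}$ as the function $g_0^i\mapsto \zeta_m^{hi}+\zeta_m^{-hi}$, including at $m\mid i$, where $\epsilon((q+\epsilon)-(q-\epsilon))=2$ matches $\zeta_m^{0}+\zeta_m^{0}$. Then I would partition $X_R$ using the sign-reversing involution $(s_0,\dots,s_k)\mapsto(-s_0,\dots,-s_k)$, which sends the weight $e(\underline s):=\frac{\sum_j s_j p^j}{2}$ to $-e(\underline s)$ and fixes only the zero vector. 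Grouping the nonzero vectors by the residue $e(\underline s)\bmod m$ produces, for each $h$ with $1\le h\le \frac{m}{2}$, the set $V_{R;h}$ of vectors with $e\equiv \pm h$, whose contribution to $\chi_R(g_0^i)$ is $n_h(\zeta_m^{hi}+\zeta_m^{-hi})$ with $2n_h=|V_{R;h}|$; the vectors with $e\equiv 0$ form $V_{R;0}$ and contribute the constant $2n_0$, while the zero vector (present iff all $r_j$ are even) contributes the extra $1_G$. Reassembling these pieces and substituting $\epsilon(\phi_h-\psi_h)$ for $\zeta_m^{hi}+\zeta_m^{-hi}$ yields the two displayed formulas, the case split being exactly whether the zero weight vector occurs.

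I expect the main obstacle to be the careful boundary handling in this last step: confirming that the involution has no fixed nonzero points, treating the self-paired residues $e\equiv 0$ and (for even $m$) $e\equiv \frac{m}{2}$ correctly so that the corresponding $|V_{R;h}|$ are even, and checking the value at the identity so that the coefficient of $1_G$ emerges as $1+2n_0$ versus $2n_0$. Evaluating at $1$, where $\chi_R(1)=\prod_j(r_j+1)=|X_R|$ must equal $1+2\sum_h n_h$ or $2\sum_h n_h$ according to the parity of the $r_j$, provides a clean consistency check on the entire bookkeeping.
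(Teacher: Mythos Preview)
Your proposal is correct and follows essentially the same strategy as the paper. The only differences are cosmetic: for part~(\ref{Chis}) the paper realizes the module explicitly as the subspace $W_{r_0,\dots,r_k}$ of homogeneous polynomials of degree $\sum_j r_j p^j$ spanned by the monomials $X^iY^{n-i}$ with $i=\sum_j i_j p^j$, $0\le i_j\le r_j$, rather than as the Frobenius-twisted tensor product $\bigotimes_j(\Sym^{r_j}V)^{[p^j]}$ (these are the same module, the isomorphism being multiplication of polynomials), and for part~(\ref{ChisInPhisPsis}) the paper simply declares the identity ``Straightforward'' without writing out the sign-pairing and residue-grouping argument you supply.
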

	
	\begin{proof}
		(\ref{PhiPsi})
		To prove that $\phi_s$ and $\psi_s$ are the restriction to $\GEN{g_0}$ of ordinary characters of $G$ we simply express them in terms of the irreducible characters
		$\eta_1$, $\eta_2$, $\theta_i$ and $\chi_i$ of $G$ as described in Table~2 of \cite{HertweckPA}.
		If $s \equiv \pm s' \bmod m$ then $\phi_s=\phi_{s'}$ and $\psi_s=\psi_{s'}$. Therefore we may assume that $1\le s \le \frac{m}{2}$.
		Firstly, if $m$ is even then $\phi_{\frac{m}{2}}$ is the restriction of $\eta_1+\eta_2$, and $\psi_{\frac{m}{2}}$
		is the restriction of any $\theta_j$ if $\epsilon=1$, and the restriction of any $\chi_i$ if
		$\epsilon=-1$.
		This covers the case $s=\frac{m}{2}$. Suppose otherwise that $1\le s<\frac{m}{2}$.
		If $\epsilon=1$ then $\phi_s$ is the restriction of $\chi_{s\frac{q-1}{2m}}$ and $\psi_s$ is the restriction of $\theta_{s\frac{q-1}{2m}}$, while if
		$\epsilon=-1$ then $\phi_s$ is the restriction of $\theta_{s\frac{q+1}{2m}}$ and $\psi_s$ is the restriction of
		$\chi_{s\frac{q+1}{2m}}$.
		
		(\ref{Chis}) Let $K$ be a field of characteristic $p$. The following defines an action by $K$-automorphisms on the group $\SL(2,q)$ on the ring of polynomials $K[X,Y]$
		\cite[Pages~14--16]{Alperin1986}:
		$$\pmatriz{{cc} a & b \\ c & d} X = aX+bY, \quad \pmatriz{{cc} a & b \\ c & d} Y = cX+dY.$$
		If $n$ is a positive integer then the vector space $V_n$ formed by the homogenous polynomials of degree $n$ is invariant under this action
		and, if moreover $n$ is even then $\pmatriz{{cc} -1 & 0 \\ 0 & -1}$ acts trivially on $V_n$. Therefore $G$ acts on $V_n$
		provided that $n$ is even.

		Let us fix integers $0\le r_0,r_1,\dots,r_k \le p-1$ such that $r_0+r_1+\dots+r_k$ is even and let $n=r_0+r_1p+\dots+r_kp^k$.
		Then $n$ is even.
		Let $W_{r_0,r_1,\dots,r_k}$ be the subspace of $V_n$ generated by the polynomials of the form $X^iY^{n-i}$ with $i=i_0+i_1p+\dots+i_kp^k$ and $0\le i_j \le
		r_j$ for every $j$. For such $s$ we have
		\begin{eqnarray*}
			\pmatriz{{cc} a & b \\ c & d} X^iY^{n-i} &=& (aX+bY)^i(cX+dY)^{n-i} \\ &=&
			\prod_{h=0}^k \left(a^{p^h} X^{p^h}+b^{p^h}Y^{p^h}\right)^{i_h}
			\left(c^{p^h} X^{p^h}+d^{p^h}Y^{p^h}\right)^{r_h-i_h}
			\\ &=&
			\prod_{h=0}^k \left(\sum_{u=0}^{i_h} \alpha_{h,u} X^{up^h} Y^{(i_h-u)p^h}\right)
			\left(\sum_{v=0}^{r_h-i_h} \beta_{h,v} X^{vp^h} Y^{(r_h-i_h-v)p^h}\right)
			\\ &=&
			\prod_{h=0}^k \left(\sum_{j=0}^{r_h} \gamma_{h,j} X^{jp^h} Y^{(r_h-j)p^h}\right)
			=
			\sum_{\stackrel{j=j_0+j_1p+\dots+j_kp^k}{0\le j_h \le r_h}} \delta_j X^j Y^{n-j} \in W_{r_0,\dots,r_k}.
		\end{eqnarray*}
		Therefore, $W_{r_0,r_1,\dots,r_k}$ is invariant by the action of $G$ and hence it is a $KG$-module.
		Let $\rho$ denote the $K$-representation of $G$ given by $W_{r_0,r_1,\dots,r_k}$ and let
		$\chi$ be the Brauer character associated to the $p$-modular character afforded by $\rho$.
		If $\epsilon=1$ then $\overline{\zeta_{2m}}$ belongs to the field with $q$ elements, so that the diagonal matrix
		$D=\diag(\overline{\zeta_{2m}},\overline{\zeta_{2m}}^{-1})$ belongs to $\SL(2,q)$.
		After a suitable election of $\zeta_{2m}$ we may assume that $g_0=D$ because $g_0$ is conjugate to a power of $D$ in $G$.
		Then each base element $X^sY^{n-s}$ is an eigenvector of $\rho(g_0)$ with eigenvalue
		$\overline{\zeta_{2m}}^{2i-n}=\overline{\zeta_{2m}}^{s}=\overline{\zeta_m}^{\frac{s}{2}}$ for $s=s_0+s_1p+\dots+s_kp^k$, $-r_j \le s_j \le r_j$ and
		$2\mid r_j-s_j$ for each $j$. Therefore $\chi_{r_0,r_1,\dots,r_k}$ coincides with the restriction of $\chi$ to $\GEN{g_0}$.
		Suppose that $\epsilon=-1$. Then $D\in \SL(2,q^2)$ and this group acts on $W_{r_0,r_1,\dots,r_k}$ in the same way.
		Again we may assume that $g_0=D$ and the same argument shows that the restriction of $\chi$ to $\GEN{g_0}$ coincides with $\chi_{r_0,r_1,\dots,r_k}$.

		(\ref{ChisIrreducible}) The absolutely irreducible characters in characteristic $p$ of $\SL(2,q)$ have been described in \cite{BrauerNesbitt} (see also
		\cite{Srinivasan}).
		After lifting these characters to $G$ we obtain that if $0\le r_0,r_1,\dots,r_{f-1}\le p-1$ and
		$r_0+r_1+\dots+r_{f-1}$ is even then $\chi_{r_0,r_1,\dots,r_{f-1}}$ is the restriction to $\GEN{g_0}$ of an irreducible Brauer character of $G$ modulo $p$
		and, conversely, the restriction to $\GEN{g_0}$ of any irreducible Brauer character of $G$ modulo $p$ is of this form.
		
		(\ref{ChisInPhisPsis}) Straightforward.
	\end{proof}
	
	\textbf{Convention}: In the remainder of the paper we will often encounter some fixed element $g_0\in G$. Then we will use the functions $\phi_h,\psi_h,\chi_{r_0,r_1,\dots,r_k} : \GEN{g_0}\rightarrow \C$ and we will abuse the notation by referring to the first two as ordinary characters of $G$ and to the last ones as the Brauer characters of $G$ module $p$, rather as the restriction to $\GEN{g_0}$ of such an ordinary or Brauer character. This will be harmless because we  will use only these restrictions.
	
	\begin{proposition}\label{TPAr=PAr}
		Let $G=\PSL(2,q)$ with $q$ an odd prime power and let $r$ be a prime not dividing $q$.
		Then $\TPA_r(G)=\VPA_r(G)$.
	\end{proposition}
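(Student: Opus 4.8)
The plan is to establish the nontrivial inclusion $\VPA_r(G)\subseteq\TPA_r(G)$, since the reverse inclusion is already recorded. First I would dispose of the degenerate cases. If $r\nmid|G|$ then $\VPA_r(G)=\emptyset$ by Proposition~\ref{PANonEmptyDivides}, and also $\TPA_r(G)=\emptyset$ because $G$ has no element of order $r$, so the equality holds trivially. If $r=2$ then $G=\PSL(2,q)$ has a single conjugacy class of involutions and conditions (V1)--(V3) leave no freedom, whence the equality is immediate. Thus I may assume that $r$ is odd and, by Cauchy's theorem, that $G$ contains an element $g_0$ of order $r$; I write $m=r$ and let $\epsilon=\pm1$ be as in the paragraph preceding Lemma~\ref{LemmaCharacters}. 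Let $\pa\in\VPA_r(G)$. Conditions (V2) and (V3) force $\pa_r$ to be the indicator function of $1^G$ and $\pa_1$ to be supported on the $\frac{r-1}{2}$ conjugacy classes $(g_0^j)^G$ with $1\le j\le\frac{r-1}{2}$; writing $c_j=\pa_1(g_0^j)$, condition (V1) reads $\sum_{j=1}^{(r-1)/2}c_j=1$. Since the $c_j$ are integers, it suffices to prove that $c_j\ge0$ for every $j$: this forces exactly one $c_{j_0}$ to equal $1$ and the remaining ones to vanish, so that $\pa$ is the distribution of partial augmentations of $g_0^{j_0}\in G$, i.e. $\pa\in\TPA_r(G)$.

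The heart of the argument is to feed the right character into condition (V4). The ordinary characters $\phi_h$ only produce the one-sided bounds $\epsilon c_j\ge-\frac{q-\epsilon}{r}$, and they are essentially the only ordinary characters that separate the classes $(g_0^j)^G$; this sign asymmetry is the main obstacle. I would resolve it with the three-dimensional Brauer character $\chi_{2,0,\dots,0}$ provided by Lemma~\ref{LemmaCharacters}.(\ref{Chis}), whose restriction to $\GEN{g_0}$ equals $\chi_{2,0,\dots,0}(g_0^j)=1+\zeta_r^{j}+\zeta_r^{-j}$ (in the notation of Lemma~\ref{LemmaCharacters}.(\ref{ChisInPhisPsis}) one has $n_0=0$ and $n_h=\delta_{h,1}$, so the factor $\epsilon$ occurring in the principal and discrete series cancels, which is exactly what makes this character decisive).

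Using that $\Tr_{\Q(\zeta_r)/\Q}(\zeta_r^a)$ equals $r-1$ when $r\mid a$ and $-1$ otherwise (Lemma~\ref{lema Margolis}), together with $\sum_j c_j=1$, a short computation shows that for every $l\in\{1,\dots,r-1\}$
\[
\mu(\zeta_r^l,\pa,\chi_{2,0,\dots,0})=c_{j^*},
\]
where $j^*$ is the unique index in $\{1,\dots,\frac{r-1}{2}\}$ with $j^*\equiv\pm l\bmod r$. By (V4) this multiplicity is a non-negative integer, and as $l$ runs over $\{1,\dots,r-1\}$ the index $j^*$ runs through every class. Hence $c_j\ge0$ for all $j$, which finishes the proof as explained above.

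The only genuine computations are the evaluation of $\mu(\zeta_r^l,\pa,\chi_{2,0,\dots,0})$ and the verification that $\chi_{2,0,\dots,0}$ is an honest Brauer character modulo $p$ with the claimed restriction; the former is the trace manipulation just described and the latter is Lemma~\ref{LemmaCharacters}.(\ref{Chis}). I expect the \emph{choice} of this particular Brauer character, rather than the calculation itself, to be the crux: it is what breaks the one-sidedness left by the ordinary characters and converts the finitely many HeLP inequalities into the decisive positivity statement $c_j\ge0$.
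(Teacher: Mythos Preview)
Your proposal is correct and follows essentially the same route as the paper: after disposing of the trivial cases ($r\nmid|G|$ and $r=2$) in the same way, the paper also applies (V4) to the three-dimensional Brauer character $\chi_2$ (which coincides with your $\chi_{2,0,\dots,0}$) and obtains the identical computation $\mu(\zeta_r^{l},\pa,\chi_2)=\pa_1(g_0^{l})\ge 0$. The only difference is cosmetic---the paper writes $\chi_2$ rather than $\chi_{2,0,\dots,0}$ and omits your motivational remarks about why the ordinary characters $\phi_h$ are insufficient.
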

	
	\begin{proof}
		The result is trivial if $q \not \equiv \pm 1 \bmod 2r$ because in such case $r$ does not divides $|G|$ and hence $\VPA_{r}(G)=\emptyset=\TPA_r(G)$, by Proposition~\ref{PANonEmptyDivides}.
		The result is also trivial if $r=2$ because all the elements of $G$ of order $2$ are conjugate.
		So suppose that $r$ is odd and $q\equiv \pm 1 \bmod 2r$.
		
		Let $\pa\in \VPA_r(G)$.
		To prove the lemma we fix an element $g_0\in G$ of order $r$ and use (V4) with the Brauer character $\chi_{2}$.
		We have that $\left\{g_0^i:i=1,\dots,\frac{r-1}{2}\right\}$ is a set of representatives of the conjugacy classes of $G$ of elements of order $r$.
		Then, by (V1), (V2) and (V3), we have $\sum_{j=1}^{\frac{r-1}{2}}{\pa_1(g_0^{j})}=1$.
		Moreover, for each  $l, i\in \{1,\dots,\frac{r-1}{2}\}$ we have
		$$\Tr_{\Q (\zeta_r)/\Q }(\chi_{2}(g_0^{i})\zeta_{r}^{-l})=\begin{cases}
		r-3, & \text{if } i\equiv \pm l \bmod r; \\ -3, & \text{otherwise}.
		\end{cases}$$
		Hence
		$$
		\mu(\zeta_{r}^{l},\pa,\chi_{2})
		=\frac{1}{r}\left((r-3)\pa_1(g_0^{l})-3\sum_{j=1, j\ne l}^{\frac{r-1}{2}}{\pa_1(g_0^{j})+3}\right)=\pa_1(g_0^{l})\; \in \; \Z_{\ge 0}.
		$$
		Therefore, there is an integer $i$ in the interval $\left[1,\frac{r-1}{2}\right]$ such that $\pa_1(g_0^i)=1$ and $\pa_1(h)=0$ for every $h\in G\setminus (g_0^i)^G$.
		Then $\pa$ is the distribution of partial augmentations of $g_0^i$.
		We conclude that $\pa\in\TPA_r(G)$.
	\end{proof}

	\begin{corollary}\label{paPrimo}
		Let $G=\PSL(2,q)$ with $q$ an odd prime power and let $m$ be a square-free positive integer.
		Assume that $q \equiv \pm 1 \bmod 2m$ and let $\pa\in \VPA_{m}(G)$.
		Then $G$ has an element $g_0$ of order $m$ such that for every prime divisor $t$ of $m$ we have
		\begin{equation}\label{patGeneral}
		\pa_{\frac{m}{t}}(g) = \begin{cases} 1, & \text{if } g \in \left(g_0^{\frac{m}{t}}\right)^G; \\ 0, &
		\text{otherwise}.\end{cases}
		\end{equation}
	\end{corollary}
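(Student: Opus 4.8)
The plan is to handle each prime divisor $t$ of $m$ separately by reducing to the prime case treated in Proposition~\ref{TPAr=PAr}, and then to glue the resulting conjugacy classes together by a single element of order $m$ via the Chinese Remainder Theorem.

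First I would fix the ambient cyclic group. Since $q\equiv\pm 1\bmod 2m$, every prime $t\mid m$ satisfies $q\equiv\pm 1\bmod t$, so $t\nmid q$ and therefore $\gcd(m,q)=1$. By Theorem~\ref{thm:ConocidoPSL(2,p)}.(\ref{Spectrum}), $G$ has an element $c$ of order $m$; set $C=\GEN{c}$. I will use the structural facts recalled at the start of this section: every element of $G$ whose order divides $m$ is conjugate into $C$, and $c^i$ is conjugate to $c^j$ in $G$ precisely when $i\equiv\pm j\bmod m$.

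Next, for a fixed prime divisor $t$ of $m$, I would pass to $\pa^{m/t}$. By \eqref{pa/m} we have $\pa^{m/t}\in\VPA_t(G)$, and since $t$ is a prime not dividing $q$, Proposition~\ref{TPAr=PAr} yields $\VPA_t(G)=\TPA_t(G)$, so $\pa^{m/t}\in\TPA_t(G)$. Reading off the defining description of $\TPA_t(G)$ at $d=1$, and recalling that $(\pa^{m/t})_1=\pa_{m/t}$, there is an element $h_t\in G$ of order $t$ with $\pa_{m/t}(g)=1$ for $g\in h_t^G$ and $\pa_{m/t}(g)=0$ otherwise. As $h_t$ has order $t$, it is conjugate to $c^{a_t m/t}$ for some $a_t\in\{1,\dots,t-1\}$.

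Finally I would align all these classes with a single generator of $C$. Here is where square-freeness is essential: the prime divisors of $m$ are pairwise distinct, so the Chinese Remainder Theorem furnishes an integer $k$ with $k\equiv a_t\bmod t$ for every prime $t\mid m$; since each $a_t\not\equiv 0\bmod t$ we get $\gcd(k,m)=1$, so $g_0:=c^k$ has order $m$ and generates $C$. Then $g_0^{m/t}=c^{km/t}$ with $km/t\equiv a_t m/t\bmod m$ (because $t\mid k-a_t$), hence $g_0^{m/t}$ is conjugate to $c^{a_t m/t}$ and thus to $h_t$, giving $(g_0^{m/t})^G=h_t^G$. Substituting this identity into the previous paragraph gives \eqref{patGeneral} for every prime $t\mid m$. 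The only genuinely delicate point is this last simultaneous alignment: individually each $\pa_{m/t}$ pins down a conjugacy class of order-$t$ elements, but producing one element $g_0$ of order $m$ whose $t$-th-power residues realize all of these classes at once is exactly what the pairwise coprimality of the $t$'s, i.e.\ the square-freeness of $m$, makes possible.
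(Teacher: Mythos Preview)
Your proof is correct and follows essentially the same route as the paper: pass to $\pa^{m/t}\in\VPA_t(G)=\TPA_t(G)$ via \eqref{pa/m} and Proposition~\ref{TPAr=PAr}, locate each resulting order-$t$ class inside a fixed cyclic subgroup of order $m$, and then use the Chinese Remainder Theorem (which is where square-freeness enters) to realize all of them simultaneously as powers of a single $g_0$. One small quibble: your appeal to Theorem~\ref{thm:ConocidoPSL(2,p)}.(\ref{Spectrum}) for the existence of an element of order $m$ in $G$ is not quite right, since that statement presupposes a torsion unit of order $n$ in $V(\Z G)$, whereas here you only have $\pa\in\VPA_m(G)$; the existence follows instead from the standard description of the cyclic subgroups of $\PSL(2,q)$ together with $q\equiv\pm1\bmod 2m$ (the paper simply takes this for granted and writes ``Fix an element $g_1$ of $G$ of order $m$'').
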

	
	\begin{proof}
		Fix an element $g_1$ of $G$ of order $m$. By \eqref{pa/m} and Proposition \ref{TPAr=PAr}, if $t$ is a prime divisor of $m$ then $\pa^{\frac{m}{t}}\in \VPA_t(G)=\TPA_t(G)$. As every element of order $t$ in $G$ is conjugate in $G$ to an element of $\GEN{g_1^{\frac{m}{t}}}$, we deduce that there is an integer $i_t$ coprime with $t$ such that $\pa^{\frac{m}{t}}$ is the distribution of partial augmentations of $g_1^{\frac{m}{t}i_t}$.
		Let $i$ be an integer with $i\equiv \pm i_t\bmod t$ for every prime $t$ dividing $m$ and let $g_0=g_1^i$.
		Then $g_0^{\frac{m}{t}}=g_1^{\frac{m}{t}i_t}$ for every prime $t$ and hence $\pa^{\frac{m}{t}}$ is the distribution of partial augmentations of $g_0^{\frac{m}{t}}$.
		In particular, \eqref{patGeneral} holds for every prime $t$.
	\end{proof}
	
	In the remainder of the section $r$ and $t$ are different primes such that $q \equiv \pm 1 \bmod 2rt$ and $\pa\in \VPA_{rt}(G)$.
	By Corollary \ref{paPrimo}, $G$ has an element $g_{0}$ of order $rt$, which will be fixed for the remainder of the section, such that
	\begin{equation}\label{par}
	\pa_r(g) = \begin{cases} 1, & \text{if } g \in  (g_0^{r})^G; \\ 0, &
	\text{otherwise};\end{cases}
	\quad \text{and} \quad 
	\pa_t(g) = \begin{cases} 1, & \text{if } g \in (g_0^{t})^G; \\ 0, &
	\text{otherwise}.\end{cases}
	\end{equation}
	As every element of $G$ of order divisible by $rt$ is conjugate to an element of $\langle g_{0}\rangle$, we have that
	$\pa_1(g)=0$ for every $g\in G$ which is not conjugate to an element of $\langle g_{0}\rangle\setminus\{1\}$.
	This will simplify the expression in (V4), as in the following lemma.
	
	For a general finite group $G$, an element $\pa =(\pa_d)_{d\mid n}$ of $\VPA_n(G)$ and a positive integer $m$ we define the \emph{accumulated virtual partial augmentations} of $\pa$ at $m$ as follows:
	$$\widetilde{\pa}_{d}(m) = \sum_{g^G,|g|=m} \pa_d(g).$$
	
	\begin{lemma}\label{mu0 2}
		Let $\chi$ be either an ordinary character or a Brauer character of $G$ module $p$.
		Then
		\begin{eqnarray*}
			\mu(1,\pa,\chi)&=&
			\frac{1}{rt} \left[\widetilde{\pa}_1(rt)\Tr_{\Q(\zeta_{rt})/\Q}(\chi(g_{0}))+
			\widetilde{\pa}_1(t) \Tr_{\Q(\zeta_{rt})/\Q}(\chi(g_{0}^{r}))+
			\widetilde{\pa}_1(r) \Tr_{\Q(\zeta_{rt})/\Q}(\chi(g_{0}^{t})) + \right.\\
			& & \left. \hspace{1cm} \Tr_{\Q (\zeta_{r})/\Q }(\chi(g_{0}^{t}))+\Tr_{\Q (\zeta_{t})/\Q }(\chi(g_{0}^{r}))+\chi(1) \right]
		\end{eqnarray*}
		and
		$$0\leq \mu(1,\pa,\chi) \leq \frac{1}{t} \left[ \chi(1)+\Tr_{\Q (\zeta_{t})/\Q }(\chi(g_{0}^{r}))\right]
		.$$
	\end{lemma}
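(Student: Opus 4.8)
The plan is to evaluate condition (V4) at $l=0$, simplify the resulting sum using the support information already recorded for $\pa$, and then deduce the displayed inequality from \eqref{mu0}. I would begin by specializing (V4) to $l=0$ and $n=rt$, so that $\zeta_{rt}^{-ld}=1$ throughout and
$$\mu(1,\pa,\chi)=\frac{1}{rt}\sum_{x^G}\sum_{d\mid rt}\pa_d(x)\Tr_{\Q(\zeta_{rt}^d)/\Q}(\chi(x)),$$
the relevant divisors being $d\in\{1,r,t,rt\}$.

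For the three ``top'' divisors the summands collapse at once. By (V1), (V2) and (V3) the function $\pa_{rt}$ is supported on $\{1\}$ with value $1$, while by \eqref{par} the functions $\pa_r$ and $\pa_t$ are the indicator class functions of $(g_0^r)^G$ and $(g_0^t)^G$ respectively. Since $\zeta_{rt}^r$, $\zeta_{rt}^t$ and $\zeta_{rt}^{rt}$ have orders $t$, $r$ and $1$, the divisors $d=r,t,rt$ contribute exactly $\Tr_{\Q(\zeta_t)/\Q}(\chi(g_0^r))$, $\Tr_{\Q(\zeta_r)/\Q}(\chi(g_0^t))$ and $\chi(1)$, which are precisely the last three terms of the claimed identity.

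The crux is the term $d=1$. Here $\pa_1$ is supported, by (V2) and (V3), on classes of elements of order $r$, $t$ or $rt$, all of which are conjugate to powers of $g_0$. I would group this sum according to the order $m\in\{r,t,rt\}$ of $x$ and argue that $\Tr_{\Q(\zeta_{rt})/\Q}(\chi(x))$ is constant on the set of classes of each fixed order $m$. Indeed, any two such elements have the form $y^j$ and $y^{j'}$ with $y=g_0^{rt/m}$ of order $m$ and $j,j'$ coprime to $m$, so their $\chi$-values differ by an automorphism of $\Q(\zeta_{rt})$ of the form $\zeta_{rt}\mapsto\zeta_{rt}^{s}$; since the trace down to $\Q$ is invariant under $\Gal(\Q(\zeta_{rt})/\Q)$, the value $\Tr_{\Q(\zeta_{rt})/\Q}(\chi(x))$ depends only on $m$. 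Pulling this common factor out of each group turns the inner sum of $\pa_1$-values into the accumulated augmentation $\widetilde{\pa}_1(m)$, and taking the representatives $g_0$, $g_0^r$, $g_0^t$ for $m=rt,t,r$ produces the first three terms. Assembling the four contributions gives the asserted formula.

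Finally, for the inequality I would invoke \eqref{mu0} with $m=r$. The divisors $d\mid rt$ with $r\mid d$ are $d=r$ and $d=rt$, and the same support facts give
$$\mu_r^{-}(\pa,\chi)=\frac{1}{rt}\bigl[\Tr_{\Q(\zeta_t)/\Q}(\chi(g_0^r))+\chi(1)\bigr],$$
whence $0\le\mu(1,\pa,\chi)\le r\,\mu_r^{-}(\pa,\chi)=\tfrac1t\bigl[\chi(1)+\Tr_{\Q(\zeta_t)/\Q}(\chi(g_0^r))\bigr]$, the lower bound being the nonnegativity asserted in (V4). I expect the only delicate point to be the Galois-invariance argument that collapses the $d=1$ sum into accumulated augmentations, namely verifying that the trace to $\Q$ really is constant across all conjugacy classes of a given order; the remaining manipulations are routine bookkeeping with the support conditions (V1)--(V3) and \eqref{par}.
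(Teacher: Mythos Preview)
Your proof is correct and follows essentially the same approach as the paper: specialize (V4) at $l=0$, use the Galois-invariance of $\Tr_{\Q(\zeta_{rt})/\Q}$ to collapse the $d=1$ term into accumulated augmentations (the paper phrases this as ``$x$ is conjugate to a power of $y$, hence $\chi(x)=\sigma(\chi(y))$ for some $\sigma\in\Gal(\Q(\zeta_e)/\Q)$''), read off the $d=r,t,rt$ contributions from \eqref{par} and (V1)--(V3), and then apply \eqref{mu0} with $m=r$ for the inequality. Your write-up is simply more detailed than the paper's.
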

	
	\begin{proof}
		If $x$ and $y$ are elements of $G$ with the same order, then $x$ is conjugate in $G$ to a power of $y$.
		This implies that if $e$ is a multiple of this common order then there exists $\sigma\in\text{Gal}(\Q (\zeta_{e})/\Q )$ such that
		$\chi(x)=\sigma(\chi(y))$. (If $\chi$ is a Brauer character modulo $p$ then one assume that $x,y\in G_p$.)
		Thus $\Tr_{\Q (\zeta_{e})/\Q }(\chi(x))=\Tr_{\Q (\zeta_{e})/\Q }(\chi(y))$.
		Combining this with (V4) we obtain the expression for $\mu(1,\pa,\chi)$ in the lemma.
		Moreover,
		$\mu_r^-(\pa,\chi)=\frac{1}{rt}(\chi(1)+\Tr_{\Q (\zeta_{t})/\Q }(\chi(g_{0}^{r})))$, by \eqref{par}.
		The inequality is then a consequence of (\ref{mu0}) for $n=rt$ and $m=r$.
	\end{proof}
	
	The specialization of the following lemma to the case where $\pa$ is the distribution of partial augmentations of a torsion element of $V(\Z G)$ is a particular case of a result of Wagner
	\cite{WagnerThesis,HeLPArticulo}.
	
	\begin{lemma}\label{t divides the accumulated}
		$t$ divides $\widetilde{\pa}_1(t)$ and $r$ divides $\widetilde{\pa}_1(r)$.
	\end{lemma}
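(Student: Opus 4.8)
The plan is to deduce both divisibilities from the integrality of a single multiplicity, using the description of $\mu(1,\pa,\chi)$ in Lemma~\ref{mu0 2}. Since that formula is symmetric under interchanging $r$ with $t$ (and $g_0^r$ with $g_0^t$), it suffices to prove $t\mid\widetilde{\pa}_1(t)$ and then swap the two primes. Conceptually this is the virtual shadow of Wagner's congruence $\varepsilon_1(u^t)\equiv\sum_{|g|\mid t}\varepsilon_g(u)\bmod t$, and the whole point is to squeeze it out of condition (V4) alone; the device is that $\mu(1,\pa,\chi)\in\Z$, so the bracketed expression in Lemma~\ref{mu0 2} is divisible by $rt$, in particular by $t$.

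First I would choose the character cleverly. The right choice is $\chi=\phi_h$ for an integer $h$ coprime to $rt$; by Lemma~\ref{LemmaCharacters}(\ref{PhiPsi}) this is (the restriction of) an ordinary character of $G$, with $\phi_h(g_0)=\epsilon(\zeta_{rt}^h+\zeta_{rt}^{-h})$, $\phi_h(g_0^r)=\epsilon(\zeta_t^h+\zeta_t^{-h})$, $\phi_h(g_0^t)=\epsilon(\zeta_r^h+\zeta_r^{-h})$ and $\phi_h(1)=q+\epsilon$.

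Next comes the main computation: evaluating, modulo $t$, the five traces in Lemma~\ref{mu0 2}. By Lemma~\ref{lema Margolis} and $\gcd(h,rt)=1$ one finds $\Tr_{\Q(\zeta_{rt})/\Q}(\zeta_{rt}^{\pm h})=\mu(rt)=1$, $\Tr_{\Q(\zeta_{rt})/\Q}(\zeta_t^{\pm h})=-(r-1)$, $\Tr_{\Q(\zeta_{rt})/\Q}(\zeta_r^{\pm h})=-(t-1)$, and $\Tr_{\Q(\zeta_r)/\Q}(\zeta_r^{\pm h})=\Tr_{\Q(\zeta_t)/\Q}(\zeta_t^{\pm h})=-1$. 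Because $g_0$ has order $rt$ we have $q\equiv\epsilon\bmod 2rt$, hence $q+\epsilon\equiv 2\epsilon\bmod t$. Substituting these into Lemma~\ref{mu0 2} and reducing modulo $t$ turns ``the bracket is $\equiv 0$'' into
\[
2\epsilon\bigl[\,\widetilde{\pa}_1(rt)-(r-1)\,\widetilde{\pa}_1(t)+\widetilde{\pa}_1(r)-1\,\bigr]\equiv 0\pmod t,
\]
and since $2\epsilon$ is a unit modulo $t$ the bracket itself vanishes modulo $t$.

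Finally I would invoke the normalisation (V1) for $\pa_1$, which together with $\pa_1(1)=0$ from (V2) reads $\widetilde{\pa}_1(rt)+\widetilde{\pa}_1(t)+\widetilde{\pa}_1(r)=1$. Replacing $\widetilde{\pa}_1(rt)+\widetilde{\pa}_1(r)$ by $1-\widetilde{\pa}_1(t)$ collapses the bracket to $-r\,\widetilde{\pa}_1(t)$, whence $r\,\widetilde{\pa}_1(t)\equiv 0\bmod t$ and, as $\gcd(r,t)=1$, $t\mid\widetilde{\pa}_1(t)$; interchanging $r$ and $t$ gives $r\mid\widetilde{\pa}_1(r)$. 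The step needing the most care is the trace bookkeeping modulo $t$, and in particular noticing that the coprimality choice $\gcd(h,rt)=1$ forces $\widetilde{\pa}_1(rt)$ and $\widetilde{\pa}_1(r)$ to appear with the same coefficient, so that (V1) disposes of both unwanted accumulated augmentations simultaneously and only $\widetilde{\pa}_1(t)$ survives.
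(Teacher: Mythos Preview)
Your argument is correct and efficient when the prime in question is odd: with $\phi_h$ for $\gcd(h,rt)=1$, the bracket in Lemma~\ref{mu0 2} simplifies (after substituting (V1)) exactly to $q-\epsilon-2\epsilon r\,\widetilde{\pa}_1(t)-2\epsilon t\,\widetilde{\pa}_1(r)$, and since $2rt\mid q-\epsilon$ you read off $t\mid 2\widetilde{\pa}_1(t)$ and $r\mid 2\widetilde{\pa}_1(r)$ simultaneously from a single multiplicity. The paper's first sub-argument uses $\phi_t$ instead of a generic $\phi_h$ and reaches the same conclusion for $t$ odd; your choice is a mild streamlining.

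There is, however, a genuine gap when the prime is $2$. Your step ``$2\epsilon$ is a unit modulo $t$'' fails for $t=2$, and the symmetry swap does not repair it: if, say, $r=2$, then modulo $2$ every term in your bracket is even (because $q\equiv\epsilon\bmod 4t$ forces $q+\epsilon$ to be even), so the congruence is vacuous and you extract no information about $\widetilde{\pa}_1(2)\bmod 2$. This is precisely the case needed for the main theorem, where $r=2$. The paper anticipates this and supplies a second, independent proof of $t\mid\widetilde{\pa}_1(t)$ under the hypothesis that $r$ is odd: it applies (V4) to $\phi_t$ at the non-trivial roots $\zeta_{rt}^{jt}$ for $j=1,\dots,\frac{r-1}{2}$, sums the resulting congruences over $j$, and deduces $t\mid r\,\widetilde{\pa}_1(t)$ without ever needing $2$ to be invertible modulo $t$. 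Since $r$ and $t$ are distinct primes, at least one is odd, so the two arguments together cover all cases; to complete your proof you would need an analogue of this second step.
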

	
	\begin{proof}
		By symmetry we only have to prove $t\mid \widetilde{\pa}_1(t)$.
		We will give one proof for the case when $t$ is odd and another one for the case when $r$ is odd.
		This cover all the cases because $r$ and $t$ are different primes.
		Write $q=\epsilon +2rth$ with $\epsilon=\pm 1$ and $h$ an integer.
		We will use Lemma \ref{mu0 2} and (V4) with the ordinary character $\phi_{t}$ (relative to the element $g_0$ of order $m=rt$ fixed above).
		
		Using Lemma~\ref{lema Margolis}, for any $j$ we have:
		\[
		\Tr_{\Q (\zeta_{rt})/\Q }(\phi_t(g_{0}^{j}))=\begin{cases}
		2\epsilon(r-1)(t-1), & \text{if }r|j;\\
		2\epsilon(1-t), & \text{if }r\nmid j.
		\end{cases}
		\]
		Clearly, we also have $\Tr_{\Q (\zeta_{t})/\Q }(\phi_t(g_{0}^{j}))=2\epsilon(t-1)$ if $\gcd(rt,j)=r$, and
		$\Tr_{\Q (\zeta_{r})/\Q }(\phi_t(g_{0}^{j}))=-2\epsilon$ if $\gcd(rt,j)=t$.
		By (V1) we have  $\widetilde{\pa}_1(r)+\widetilde{\pa}_1(t)+\widetilde{\pa}_1(rt)=1$.
		Combining this  with Lemma \ref{mu0 2} we get
		$$rt \mu(1,\pa,\phi_t) =2r\epsilon(t-1)\widetilde{\pa}_1(t)+2rth.$$
		Thus, if $t$ is odd then $t$ divides $\widetilde{\pa}_1(t)$.
		
		Suppose $r$ odd.
		Let $1\leq j\leq\frac{r-1}{2}$ be an integer. We have for every integer $i$ that
		\begin{equation}
		\Tr_{\Q(\zeta_{rt})/\Q}(\phi_t(g_{0}^{i})\zeta_{rt}^{-jt})=\begin{cases}
		\epsilon(r-2)(t-1), & \text{if }i\equiv\pm j\bmod r;\\
		2\epsilon(1-t), & \text{if }i\not\equiv\pm j\bmod r.
		\end{cases}\label{eq:TrazaRT}
		\end{equation}
		For every $1\leq i\leq \frac{r-1}{2}$ we denote $x_{i}=\sum_{1\leq k\le \frac{rt}{2},k\equiv\pm i\bmod r}\pa_1(g_{0}^{k})$.
		Then $\widetilde{\pa}_1(t)+\sum_{i=1}^{\frac{r-1}{2}} x_i=1$.
		By (\ref{eq:TrazaRT}), we obtain that
		$\sum_{1\le k \le \frac{rt}{2}} \pa_1(g_0^k) \Tr_{\Q(\zeta_{rt})/\Q}(\phi_t(g_{0}^k)\zeta_{rt}^{-jt})=
		\epsilon(t-1)(-2\widetilde{\pa}_1(t)+(r-2)x_{j}-2\sum_{i\ne j}x_{i}
		)=\epsilon(t-1)(rx_{j}-2)$.
		Moreover, we have
		$\Tr_{\Q(\zeta_{t})/\Q}(\phi_t(g_0^{r})\zeta_{rt}^{-jtr})=2\epsilon(t-1)$
		and
		\[
		\Tr_{\Q(\zeta_{r})/\Q}(\phi_t(g_0^{t})\zeta_{rt}^{-jt^{2}})=
		\epsilon w_{j}\quad \text{with} \quad w_j=\begin{cases} (r-2), & \text{if } jt \equiv\pm 1 \bmod r;\\
		-2, & \text{if } jt \not\equiv\pm 1\bmod r.
		\end{cases}
		\]
		Applying condition (V4) with the character $\phi_t$ and $l=jt$ we deduce that
		\[
		\epsilon(t-1)(rx_{j}-2)+2\epsilon(t-1)+\epsilon w_{j}+2rth+2\epsilon=2hrt+\epsilon(r x_{j}(t-1)+2+w_{j})
		\]
		is a multiple of $rt$.
		In particular, $t$ divides $2+w_j-rx_j$ for every $j=1,\dots,\frac{r-1}{2}$.
		Summing for $j=1,\ldots,\frac{r-1}{2}$ and taking into account that $\sum_{j=1}^{\frac{r-1}{2}}w_{j}=1$ we obtain that $t$ divides
		$r\left(1-\sum_{j=1}^{\frac{r-1}{2}} x_j\right)=r\widetilde{\pa}_1(t)$.
		Therefore $t\mid \widetilde{\pa}_1(t)$, as desired.
	\end{proof}
	
	\begin{proposition}\label{accumulated}
		Let $\pa$ be an element of $\VPA_{rt}(G)$, where $G=\PSL(2,q)$ for an odd prime power $q$ and $r$ and $t$ are different primes with $(rt,q)=1$.
		Then 
		$$\widetilde{\pa}_1(r)=\widetilde{\pa}_1(t)=0 \quad\text{and}\quad \widetilde{\pa}_1(rt)=1.$$
	\end{proposition}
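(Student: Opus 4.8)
The plan is to determine the three accumulated augmentations $\widetilde{\pa}_1(r)$, $\widetilde{\pa}_1(t)$, $\widetilde{\pa}_1(rt)$ from three inputs: the identity $\widetilde{\pa}_1(r)+\widetilde{\pa}_1(t)+\widetilde{\pa}_1(rt)=1$, which follows from (V1), (V2) and \eqref{par} (only classes of order $r$, $t$ and $rt$ can support $\pa_1$); the divisibilities $t\mid\widetilde{\pa}_1(t)$ and $r\mid\widetilde{\pa}_1(r)$ of Lemma~\ref{t divides the accumulated}; and the exact expression for $\mu(1,\pa,\chi)$ together with its two-sided estimate $0\le\mu(1,\pa,\chi)\le\frac1t[\chi(1)+\Tr_{\Q(\zeta_t)/\Q}(\chi(g_0^r))]$ from Lemma~\ref{mu0 2}, evaluated at a couple of well-chosen Brauer characters. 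Since the whole setup is symmetric in $r$ and $t$, I may assume $r<t$.

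The heart of the argument is to feed the Brauer character $\chi_{2r}$ of Lemma~\ref{LemmaCharacters} into Lemma~\ref{mu0 2}. Writing $\chi_{2r}(g_0^i)=\sum_{k=-r}^{r}\zeta_{rt}^{ik}$ and computing all traces with Lemma~\ref{lema Margolis} (they are Ramanujan sums), I expect the identity $\Tr_{\Q(\zeta_{rt})/\Q}(\chi_{2r}(g_0^t))=\Tr_{\Q(\zeta_{rt})/\Q}(\chi_{2r}(g_0))=(r-1)(t-1)$. After substituting $\widetilde{\pa}_1(rt)=1-\widetilde{\pa}_1(t)-\widetilde{\pa}_1(r)$ this makes the coefficient of $\widetilde{\pa}_1(r)$ cancel, the constant part $T_0+S_r+S_t+\chi_{2r}(1)$ equal $rt$, and the coefficient of $\widetilde{\pa}_1(t)$ equal $-2r(r-1)$; thus I anticipate the clean formula $\mu(1,\pa,\chi_{2r})=1-\frac{2(r-1)}{t}\widetilde{\pa}_1(t)$. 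The same bookkeeping with the degree-$3$ character $\chi_2$ (which stays symmetric in $r,t$) should give $\mu(1,\pa,\chi_2)=1-\frac2t\widetilde{\pa}_1(t)-\frac2r\widetilde{\pa}_1(r)$.

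With these formulas in hand the conclusion follows by a squeeze. For $\chi_{2r}$ the estimate of Lemma~\ref{mu0 2} reads $0\le\mu(1,\pa,\chi_{2r})\le 1$, since the bound $\frac1t[\chi_{2r}(1)+\Tr_{\Q(\zeta_t)/\Q}(\chi_{2r}(g_0^r))]$ evaluates to $\frac1t[(2r+1)+(t-1-2r)]=1$: the upper bound forces $\widetilde{\pa}_1(t)\ge 0$ and the lower bound forces $\widetilde{\pa}_1(t)\le\frac{t}{2(r-1)}<t$, so the divisibility $t\mid\widetilde{\pa}_1(t)$ leaves only $\widetilde{\pa}_1(t)=0$. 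Substituting this into the $\chi_2$ formula gives $\mu(1,\pa,\chi_2)=1-\frac2r\widetilde{\pa}_1(r)$ with $0\le\mu(1,\pa,\chi_2)\le1$, and the identical squeeze together with $r\mid\widetilde{\pa}_1(r)$ yields $\widetilde{\pa}_1(r)=0$. The identity from (V1) then gives $\widetilde{\pa}_1(rt)=1$.

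The main obstacle is the availability and bookkeeping of the asymmetric character. The cancellation $\Tr(\chi_{2r}(g_0^t))=\Tr(\chi_{2r}(g_0))$, which is exactly what removes $\widetilde{\pa}_1(r)$ from the formula, hinges on the summation range $-r\le k\le r$ meeting three multiples of $r$ but only one multiple of $t$ (here $r<t$ is essential), and this must be checked carefully. More seriously, $\chi_{2r}$ is a genuine Brauer character only when $2r\le p-1$. This holds automatically when $f=1$, since then $p=q\ge 2rt-1>2r$, but it can fail for small $p$ with $f\ge2$ (for instance $q=125$, $r=3$, $t=7$). In those cases one must replace $\chi_{2r}$ by a suitable multi-digit character $\chi_{r_0,\dots,r_{f-1}}$ of Lemma~\ref{LemmaCharacters} realizing the same two features, namely $T_t=T_0$ and an upper bound in Lemma~\ref{mu0 2} small enough that, combined with $t\mid\widetilde{\pa}_1(t)$, it still pins $\widetilde{\pa}_1(t)=0$; producing such a character uniformly is the delicate point.
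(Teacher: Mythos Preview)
Your approach is correct and coincides with the paper's: feed $\chi_{2r}$ into Lemma~\ref{mu0 2} to squeeze $\widetilde{\pa}_1(t)=0$, then $\chi_2$ to squeeze $\widetilde{\pa}_1(r)=0$, invoking the divisibilities of Lemma~\ref{t divides the accumulated} at each step. (Your formula $\mu(1,\pa,\chi_{2r})=1-\tfrac{2(r-1)}{t}\,\widetilde{\pa}_1(t)$ is in fact the correct one; the paper's displayed value omits the factor $r-1$, a harmless slip that does not affect the squeeze.)

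The obstacle you raise at the end is not real. Lemma~\ref{LemmaCharacters}\eqref{Chis} is stated for arbitrary $R\in\Z^{k+1}$ with even sum, without the bound $r_j\le p-1$; that bound enters the proof only to ensure that the spanning monomials of $W_{r_0,\dots,r_k}$ are linearly independent. For a \emph{single} index $R=(2r)$ the module $W_{2r}$ is simply the full space $V_{2r}$ of homogeneous polynomials of degree $2r$, and its monomials $X^iY^{2r-i}$ ($0\le i\le 2r$) are a basis for every $r$. Thus $\chi_{2r}$ is the Brauer character of $V_{2r}$ regardless of whether $2r\le p-1$, and no multi-digit replacement is needed.
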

	\begin{proof}
		
		By symmetry we may assume that $r<t$.
		We first prove that $\widetilde{\pa}_1(t)=0$.
		For that we use Lemma~\ref{mu0 2} applied to the Brauer character $\chi_{2r}$.
		Using Lemma~\ref{lema Margolis} we have
		\[
		\Tr_{\Q (\zeta_{rt})/\Q }(\chi_{2r}(g_{0}))=(t-1)(r-1),
		\]
		\[
		\Tr_{\Q (\zeta_{rt})/\Q }(\chi_{2r}(g_{0}^{t}))=(t-1)(r-1),\quad\Tr_{\Q (\zeta_{rt})/\Q
		}(\chi_{2r}(g_{0}^{r}))=(r-1)(t-1-2r),
		\]
		\[
		\Tr_{\Q (\zeta_{r})/\Q }(\chi_{2r}(g_{0}^{t}))=r-1\quad\text{ and }\quad\Tr_{\Q (\zeta_{t})/\Q
		}(\chi_{2r}(g_{0}^{r}))=t-1-2r.
		\]
		Combining these equalities with
		$\widetilde{\pa}_1(r)+\widetilde{\pa}_1(t)+\widetilde{\pa}_1(rt)=1$, we obtain by straightforward calculations that
		$\mu(1,\pa,\chi_{2r})=1-2\frac{\widetilde{\pa}_1(t)}{t}$ and $\chi_{2r}(1)
		+\Tr_{\Q(\zeta_{t})/\Q }(\chi_{2r}(g_{0}^{r}))=t$.
		Using Lemma~\ref{mu0 2}, we conclude that $0\leq \frac{\widetilde{\pa}_1(t)}{t}\leq\frac{1}{2}$.
		By Lemma~\ref{t divides the accumulated}, we know that $\frac{\widetilde{\pa}_1(t)}{t}$ is an integer, hence
		$\widetilde{\pa}_1(t)=0$ as desired.
		
		Then $\widetilde{\pa}_1(r)+\widetilde{\pa}_1(rt)=1$ and it remains only to
		show that $\widetilde{\pa}_1(r)=0$.
		For that we use Lemma~\ref{mu0 2} with the Brauer character $\chi_{2}$.
		In this case we have
		\[
		\Tr_{\Q (\zeta_{rt})/\Q }(\chi_{2}(g_{0}))=tr-r-t+3,
		\]
		\[
		\Tr_{\Q (\zeta_{rt})/\Q }(\chi_{2}(g_{0}^{t}))=tr-r-3t+3,\quad\Tr_{\Q (\zeta_{rt})/\Q
		}(\chi_{2}(g_{0}^{r}))=tr-3r-t+3,
		\]
		\[
		\Tr_{\Q (\zeta_{r})/\Q }(\chi_{2}(g_{0}^{t}))=r-3\quad\text{ and }\quad\Tr_{\Q (\zeta_{t})/\Q
		}(\chi_{2}(g_{0}^{r}))=t-3.
		\]
		Therefore, we have that $\mu(1,\pa,\chi_{2})=1-2\frac{\widetilde{\pa}_1(r)}{r}$ and
		$\chi_{2}(1)+\Tr_{\Q (\zeta_{t})/\Q }(\chi_{2}(g_{0}^{r}))=t$.
		Applying Lemma~\ref{mu0 2} we obtain $0\leq\frac{\widetilde{\pa}_1(r)}{r}\leq\frac{1}{2}$ and the result follows,
		since, by Lemma~\ref{t divides the accumulated}, $\frac{\widetilde{\pa}_1(r)}{r}$ is an integer.
	\end{proof}
	
	\section{Proof of Theorem~\ref{main}}\label{SectionProof}
	
	In this section we prove Theorem~\ref{main}.
	So $t$ is an odd prime integer, $q$ is a prime power with $q\equiv \pm 1\bmod 4t$ and $G=\PSL(2,q)$.
	If $t=3$ then $G$ has a unique conjugacy class of elements of order $3$ and a unique one of elements of order $6$.
	Combining this with Proposition \ref{accumulated} we deduce that $\VPA_6(G)=\TPA_6(G)$.
	The specialization of this to partial augmentations of torsion units is a result of Hertweck \cite[Proposition~6.6]{HertweckPA}.
	So in the remainder we assume that $t\ge 5$.
	We set $n=\frac{t-1}{2}$.
	
	We first prove the inclusion $\VPA_{2t}(G)\subseteq \TPA_{2t}(G)\cup \{\pa^{(g_0)} : g_0\in G, |g_0|=2t\}$.
	For that we take an element $\pa=(\pa_1,\pa_2,\pa_t,\pa_{2t})$ of $\VPA_{2t}(G)$ and we show that there is an element $g_0$ of order $2t$ in $G$ such that either $\pa=\varepsilon^*$ or $\pa=\pa^{(g_0)}$, where $\varepsilon^*$ is the distribution of partial augmentations of $g_0$ and $\pa^{(g_0)}$ is as in \eqref{VPASpecial}.
	By (V3), if $g$ is an element of $G$ of order not dividing $2t$ then $\pa_d(g)=0$ for each $d\mid 2t$.
	By the results of Section~\ref{SectionAccumulated}, there exists $g_0\in G$ of order $2t$ such that  $\pa_t$ and  $\pa_2$ are as in (\ref{par}) with $r=2$.
	This shows that $\pa_t=\pa^{(g_0)}_t=\varepsilon^*_t$ and $\pa_2=\pa_2^{(g_0)}=\varepsilon^*_2$, where $\varepsilon^*$ is the distribution of partial augmentations of $g_0$.
	As, clearly $\pa_{2t}=\pa^{(g_0)}_{2t}=\varepsilon_{2t}^{*}$, it remains to show that $\pa_1$ is either $\varepsilon^*_1$ or $\pa^{(g_0)}_1$.
	
	Let $\Odd$ denote the set of odd integers in the interval $[1,t-1]$ and for each $l\in [1,t-1]$ let
	$$i_l=\begin{cases}
	l, & \text{if } l\in \Odd; \\ t-l, & \text{otherwise};
	\end{cases} \; w_l = \begin{cases}1, &\text{if } l\in \{1, t-1\};\\ 0, &\text{otherwise}; \end{cases}
	\; \text{and} \;
	W_l = \begin{cases}1, &\text{if } l\in \{1, 2, t-2, t-1\};\\
	0, &\text{otherwise}. \end{cases}$$
	Observe that $i_l$ is the unique element $j\in \Odd$ with $j\equiv \pm l \bmod t$ and $i_{nl}$ is the unique element $j\in \Odd$ with $2j\equiv \pm l \bmod t$. Moreover, $i_l\ne i_{nl}$ because $n\not\equiv \pm 1 \bmod t$, as $t\ge 5$.
	
	We also use the notation $T=\Tr_{\Q(\zeta_{2t})/\Q}=\Tr_{\Q(\zeta_{t})/\Q}$. 
	
	Observe that $\{g_0^i : i\in \Odd \}$ and $\{g_0^{t-i} : i\in \Odd \}$ are sets of representatives of the conjugacy classes of elements of $G$ of orders $2t$ and $t$, respectively. By Proposition~\ref{accumulated} we have
	\begin{equation}\label{ValuesAccumulated}
	\pa_{1}(g_0^t) = \sum_{i\in \Odd} \pa_1\left(g_{0}^{t-i}\right)=0 \quad \text{ and } \quad
	\sum_{i\in\Odd}\pa_1\left(g_{0}^{i}\right)=1.
	\end{equation}
	
	By (V4) and Lemma \ref{LemmaCharacters}.\eqref{Chis} if $m$ is an even integer and $l\in \Z$ then 	
	\begin{eqnarray}\label{MuSimple}
	\nonumber
	\frac{1}{2t}  \left[
	\chi_m(1)+  \chi_m\left(g_0^t\right)(-1)^l +T\left(\chi_m\left(g_0^2\right)\zeta_{2t}^{-2l}\right) \right. \hspace{5cm}& & \\
	\left. + \sum_{i \in \Odd} \left( \pa_1\left(g_{0}^{i}\right)T\left(\chi_m\left(g_0^{i}\right)\zeta_{2t}^{-l}\right) +
	\pa_1\left(g_{0}^{t-i}\right) T\left(\chi_m\left(g_0^{t-i}\right)\zeta_{2t}^{-l}\right)\right) \right] & \in & \Z_{\ge 0}.
	\end{eqnarray}
	We will use this with $l\in \{1,\dots, t-1\}$ and $m\in \{2,4\}$. To facilitate the calculations we collect the following equalities which are all direct application of Lemma \ref{lema Margolis}:
	$$\matriz{{cc}
		\chi_2(1)=3, \quad \chi_2\left(g_0^t\right)=-1 & \chi_4(1)=5,
		\quad \chi_4\left(g_0^t\right)=1, \\ T\left(\chi_2\left(g_0^{2}\right)\zeta_{2t}^{-2l}\right) = t w_l -3, & T\left(\chi_4\left(g_0^{2}\right)\zeta_{2t}^{-2l}\right) = t W_l -5.
	}$$	
	Moreover, if $i\in \Odd$ then
	$$\matriz{{ll}
		T\left(\chi_2\left(g_0^{i}\right)\zeta_{2t}^{-l}\right) =
		\begin{cases}
		(1-t)(-1)^l, &\text{if } i=i_l;\\
		(-1)^l, & \text{otherwise};
		\end{cases}
		& 	
		T\left(\chi_4\left(g_0^i\right)\zeta_{2t}^{-l}\right) =
		\begin{cases}
		(t+1)(-1)^{l+1}, &\text{if } i=i_l;\\
		(t-1)(-1)^l, & \text{if } i=i_{nl};\\
		(-1)^{l+1}, & \text{otherwise};
		\end{cases}}$$
	$$\matriz{{ll}
		T\left(\chi_2\left(g_0^{t-i}\right)\zeta_{2t}^{-l}\right)=
		\begin{cases}
		(t-3)(-1)^l, & \text{if } i=i_l;\\
		(-3)(-1)^l, & \text{otherwise};
		\end{cases}	
		&
		T\left(\chi_4\left(g_0^{t-i}\right)\zeta_{2t}^{-l}\right)=
		\begin{cases}
		(t-5)(-1)^l, & \text{if } i\in \{i_l,i_{nl}\};\\
		(-5)(-1)^l, & \text{oherwise}.	
		\end{cases}
	}$$
	Plugging this information in \eqref{MuSimple} for $m=2$ and $m=4$ and using (\ref{ValuesAccumulated}),  we obtain
	\begin{equation}\label{MuSimpleChi_2}
	\frac{1}{2}\left( (-1)^l\left( \pa_1\left(g_0^{t-i_l}\right)-\pa_1\left(g_0^{i_l}\right) \right) + w_l \right)\; \in \; \Z_{\ge 0}
	\end{equation}
	and
	\begin{equation}\label{MuSimpleChi_4}
	\frac{1}{2}\left( (-1)^l\left( \pa_1\left(g_0^{i_{nl}}\right) + \pa_1\left(g_0^{t-i_{nl}}\right)
	-\pa_1\left(g_0^{i_l}\right) +\pa_1\left(g_0^{t-i_l}\right)  \right) + W_l \right) \; \in \; \Z_{\ge 0}.
	\end{equation}
	Using \eqref{MuSimpleChi_2} with $l$ and with $t-l$ we obtain
	$\left| \pa_1(g_0^l) - \pa_1(g_0^{t-l})\right| \leq w_l$
	if $l\in \Odd$.
	In particular,
	\begin{equation}\label{pa1NoVarrho}
	\pa_1(g_0^l) = \pa_1(g_0^{t-l}), \text{ if } l\in \Odd\setminus \{1\}.
	\end{equation}
	This together with (\ref{ValuesAccumulated}) yields
	\begin{equation}\label{pa1Varrho}
	\pa_1(g_0) = 1-\sum_{l\in \Odd\setminus\{1\}} \pa_1(g_0^l)=
	1-\sum_{l\in \Odd\setminus\{1\}} \pa_1(g_0^{t-l})=1+\pa_1(g_0^{t-1}).
	\end{equation}
	We now combine \eqref{pa1NoVarrho} and \eqref{pa1Varrho} with \eqref{MuSimpleChi_4} for $l$ and $t-l$.
	When we take $l=1$ we obtain
	$$\pa_1(g_0^{i_n})=\pa_1(g_0^{t-i_n})\in \{0,1\};$$
	and when we take $l\in \Odd\setminus \{1,t-2\}$ we have
	$$\pa_1(g_0^{i_{nl}})=\pa_1(g_0^{t-i_{nl}})=0, \quad \text{if } l\in \Odd\setminus \{1,t-2\}.$$
	As $l\mapsto i_{nl}$ defines a bijection $\Odd\rightarrow \Odd$ mapping $t-2$ to $1$, the latter is equivalent to
	$$\pa_1(g_0^l)=\pa_1(g_0^{t-l})=0, \quad \text{for all } l\in \Odd\setminus\{i_n,1\}.$$
	Thus
	$$\pa_1(g_0)=1+\pa_1(g_0^{t-1})=1-\sum_{l\in \Odd\setminus \{1\}} \pa_1(g_0^{t-l})=1-\pa_1(g_0^{t-i_n})=1-\pa_1(g_0^{i_n})\in \{0,1\}.$$
	Since $\{i_n,t-i_n\}=\{n,n+1\}$, we conclude that either $\pa_{1}(g_0)=1$ and $\pa_{1}(g_0^l)=0$ for every $2\le l \le t-1$, or $\pa_1(g_0^{t-1})=-1$, $\pa_1(g_0^n)=\pa_1(g_0^{n+1})=1$ and $\pa_1(g_0^l)=0$ for every integer $l$ in $[1,t-1]\setminus \{n,n+1,t-1\}$.
	In the first case $\pa_1=\varepsilon^*_1$ and in the latter case $\pa_1=\pa^{(g_0)}_1$, as desired.
	This finishes the necessary part of the proof.
	
	To finish the proof of Theorem~\ref{main} it remains to prove that if $g_0$ is an element of $G$ of order $2t$ then $\pa^{(g_0)}\in \VPA_{2t}(G)$.
	That $\pa^{(g_0)}$ satisfies conditions (V1), (V2) and (V3) follows by straightforward arguments.
	So it remains to show that the following is a non-negative integer for every ordinary or Brauer character:
	$$\mu\left(\zeta_{2t}^l, \pa^{(g_0)}, \chi\right)=\frac{1}{2t}  \left[
	\chi(1)+  \chi\left(g_0^t\right)(-1)^l +T\left(\chi\left(g_0^2\right)\zeta_{2t}^{-2l} + \left(\chi\left(g_0^n\right)
	+ \chi\left(g_0^{n+1}\right) - \chi\left(g_0^{t-1}\right)\right)\zeta_{2t}^{-l}\right) \right].$$
	Actually, it suffices to consider Brauer characters module the prime $p$ dividing $q$.
	This is a consequence of the following remark, which was brought to our attention by Leo Margolis.
	
	\begin{remark}\label{ReductionBrauerp}
		Let $G=\PSL(2,q)$, with $q$ a power of a prime $p$ and let $n$ be an integer coprime to $p$.
		Let $\pa=(\pa_d)_{d\mid n}$ be a list of class functions of $G$ satisfying conditions (V1), (V2) and (V3).
		Then $\pa\in \VPA_n(G)$ if and only if it satisfies (V4) for every irreducible Brauer character of $G$ modulo $p$.
	\end{remark}
	
	\begin{proof}
		Suppose that $\pa\in \VPA_n(G)$ satisfies (V4) for the irreducible Brauer characters of $G$ modulo $p$.
		Observe that $\mu(\zeta_n^l,\pa,\chi)$ is $\Z$-linear in the last argument.
		This implies that if condition (V4) holds for the class functions $f_1,\dots,f_k$
		then it also holds for each linear combination $\chi=a_1f_1+\dots +a_kf_k$ with $a_1,\dots,a_k$ non-negative integers.
		This is also valid for class functions defined on the $t$-regular elements of $G$ for a given prime $t$.
		Therefore, to verify (V4) it is enough to consider irreducible ordinary characters and irreducible Brauer characters of $G$.
		Moreover, as $n$ is coprime with $p$, by (V3), each non-zero summand in
		the expression of $\mu(\zeta_n^l,\pa,\chi)$ correspond to $p$-regular elements, for any ordinary or Brauer character of $G$.
		Thus we only have to consider the restriction of each ordinary character of $G$ to the $p$-regular elements
		and the restriction of Brauer characters of $G$ modulo a prime $t$ to the $\{p,t\}$-regular elements.
		If $t$ is a prime integer then the restriction to the $t$-regular elements of $G$ of an
		irreducible ordinary character of $G$ is a linear combination of the irreducible Brauer characters of $G$ modulo $t$
		with coefficients in the decomposition matrix relative to $t$ and these coefficients are non-negative
		(see e.g. \cite[Section~15.2]{Serre1977}).
		Using the expression of the ordinary characters of $G$ on the $p$-regular elements in terms of the Brauer characters of $G$
		modulo $p$ we deduce that (V4) holds for all the ordinary characters of $G$.
		Let now $t$ be a prime different from $p$.
		The decomposition matrix $A$ of $G$ relative to $t$ is described in \cite{Burkhardt}.
		Every non-zero column of $A$ contains an entry equal to $1$ in a row on which all the other entries are $0$.
		This implies that each Brauer character of $G$ modulo $t$ equals the restriction to the $t$-regular elements of an ordinary character of $G$. Hence (V4) also holds for Brauer characters of $G$ modulo $t$.
	\end{proof}
	
	By Lemma~\ref{LemmaCharacters}.\eqref{ChisIrreducible} and Remark~\ref{ReductionBrauerp}, it remains to show that if $r_0,r_1,\dots,r_k$ are non-negative integers with $r_0+\dots+r_k$ even and $l$ is an integer then $\mu(\zeta_{2t}^l,\pa^{(g_0)},\chi_{r_0,r_1,\dots,r_k})\; \in \; \Z_{\ge 0}$. For that we use that the map $\chi\mapsto \mu(\zeta_{2t}^l,\pa^{(g_0)},\chi)$ is linear and the expression of $\chi_{r_0,r_1,\dots,r_k}$ obtained in Lemma~\ref{LemmaCharacters}.(\ref{ChisInPhisPsis}) in terms of the ordinary characters $1_G$, $\phi_h$ and $\psi_h$ for $h\in \{1,\dots,t\}$. Hence we start considering the latter characters.
	
	In the remainder of the proof $l$ is an integer, $h\in \{1,\dots,t\}$ and $\epsilon=\pm 1$ with $q\equiv \epsilon \bmod 4t$.
	We will use Lemma~\ref{lema Margolis} without specific mention.
	An easy calculation shows that
	\begin{equation}\label{mu01}
	\mu(\zeta_{2t}^l,\pa^{(g_0)},\psi_h)=\frac{q-\epsilon}{2t}
	\quad \text{and} \quad 
	\mu(\zeta_{2t}^{l},\pa^{(g_0)},1_{G})= \begin{cases} 1, & \text{if } 2t\mid l; \\ 0, & \text{otherwise}.\end{cases}
	\end{equation}

	Now we calculate $\mu(\zeta_{2t}^{l},\pa^{(g_0)},\phi_{h})$.
	For that we introduce the following notation:
	$$
	\vartheta_{h,l}=\begin{cases}
	1, & \text{if } h \equiv \pm 2l \bmod 2t, \text{ or }  h \equiv \pm l \bmod t \text{ and } 2\nmid l; \\
	0, & \text{otherwise};
	\end{cases}
	\quad
	\gamma_{i,l}=\begin{cases}
	t, & \text{if }hi\equiv\pm l\bmod t;\\
	0, & \text{if }hi\not\equiv\pm l\bmod t;
	\end{cases}
	$$
	for each $1\leq i\leq t-1$.
	Clearly, we have $\phi_h(g_0^t)=2\epsilon(-1)^h$, $T(\phi_{h}(g_{0}^{i})\zeta_{2t}^{-l}) = (\gamma_{i,l} - 2)\epsilon(-1)^{hi+l}$, $\gamma_{n,l}=\gamma_{n+1,l}$,  $\gamma_{2,2l}=\gamma_{t-1,l}=\gamma_{1,l}$ and
	$$\gamma_{1,l}(1-(-1)^l)+\gamma_{n,l}(-1)^{hn+l}(1+(-1)^h)=2t\vartheta_{h,l}.$$
	Therefore
	$$T\left(\phi_h\left(g_0^2\right)\zeta_{2t}^{-2l} - \phi_h\left(g_0^{t-1}\right)\zeta_{2t}^{-l}\right)=
	\epsilon(\gamma_{1,l} -2)(1-(-1)^l)$$
	and
	$$T\left(\left(\phi_h\left(g_0^n\right) + \phi_h\left(g_0^{n+1}\right)\right)\zeta_{2t}^{-l}\right)=
	\epsilon(\gamma_{n,l}-2)(-1)^{hn+l}(1+(-1)^h))= \begin{cases}
	2\epsilon(\gamma_{n,l}-2)(-1)^{l}, & \text{if } 2\mid h;\\
	0, & \text{if } 2\nmid h.
	\end{cases}$$
	Hence
	\begin{equation}\label{muphi}
	\mu\left(\zeta_{2t}^l, \pa^{(g_0)}, \phi_h\right)=
	\frac{q-\epsilon+\epsilon(\gamma_{1,l}(1-(-1)^l)+\gamma_{n,l}(-1)^{hn+l}(1+(-1)^h))}{2t}=
	\frac{q-\epsilon}{2t}  +\epsilon \; \vartheta_{h,l}.
	\end{equation}
	
	Let $R=(r_0,r_1,\dots,r_k)\in\Z^{k+1}$ with $r_0+r_1+\dots+r_k$ even, and let $2n_h$ be the cardinality of the set $V_{R;h}$ defined in (\ref{Vrh}). Observe that $\mu(\xi,\pa,\chi)$ is linear in the third argument.
	Then, using Lemma~\ref{LemmaCharacters}.(\ref{ChisInPhisPsis}), \eqref{mu01} and \eqref{muphi} we obtain
	\begin{equation}\label{muchivarios}
	\mu(\zeta_{2t}^l,\pa^{(g_0)},\chi_{r_0,\dots,r_{k}}) =
	k_0\mu(\zeta_{2t}^l,\pa^{(g_0)},1_G)+\sum_{h=1}^{t} n_{h}\vartheta_{h,l},
	\end{equation}
	where $k_0$ is either $2n_0$ or $1+2n_0$. This finishes the proof of Theorem~\ref{main}, as the expression in \eqref{muchivarios} is a non-negative integer because $\mu(\zeta_{2t}^l,\pa^{(g_0)},1_G)$ is either $0$ or $1$ by \eqref{mu01}, and all the $n_h$ are non-negative integers.
	
	\textbf{Acknowledgment}. We are very thankful to Andreas B\"{a}chle and Leo Margolis for several useful conversations on Zassenhaus Conjecture.

	\bibliographystyle{alpha}
	\bibliography{ReferencesMSC}

\begin{thebibliography}{MRSW87}

\bibitem[Alp86]{Alperin1986}
J.~L. Alperin.
\newblock {\em Local representation theory}, volume~11 of {\em Cambridge
  Studies in Advanced Mathematics}.
\newblock Cambridge University Press, Cambridge, 1986.
\newblock Modular representations as an introduction to the local
  representation theory of finite groups.

\bibitem[Ber55]{Berman1955}
S.~D. Berman.
\newblock On the equation {$x^m=1$} in an integral group ring.
\newblock {\em Ukrain. Mat. \v Z.}, 7:253--261, 1955.

\bibitem[BM]{BachleMargolis4Primary}
A.~Bächle and L.~Margolis.
\newblock On the prime graph question for integral group rings of 4-primary
  groups ii.
\newblock {\em https://arxiv.org/abs/1606.01506}.

\bibitem[BM14]{BachleMargolis2014}
A.~Bächle and L.~Margolis.
\newblock Rational conjugacy of torsion units in integral group rings of
  non-solvable groups.
\newblock {\em http://arxiv.org/abs/1305.7419}, 2014.

\bibitem[BM15]{HeLPArticulo}
A.~Bächle and L.~Margolis.
\newblock Help -- a gap-package for torsion units in integral group rings.
\newblock {\em http://arxiv.org/abs/1507.08174}, 2015.

\bibitem[BN41]{BrauerNesbitt}
R.~Brauer and C.~Nesbitt.
\newblock On the modular characters of groups.
\newblock {\em Ann. of Math. (2)}, 42:556--590, 1941.

\bibitem[Bur76]{Burkhardt}
R.~Burkhardt.
\newblock Die {Z}erlegungsmatrizen der {G}ruppen {${\rm PSL}(2,p^{f})$}.
\newblock {\em J. Algebra}, 40(1):75--96, 1976.

\bibitem[CMdR13]{CaicedoMargolisdelRio2013}
Mauricio Caicedo, Leo Margolis, and {\'A}ngel del R{\'{\i}}o.
\newblock Zassenhaus conjecture for cyclic-by-abelian groups.
\newblock {\em J. Lond. Math. Soc. (2)}, 88(1):65--78, 2013.

\bibitem[Gil13]{Gildea2013}
J.~Gildea.
\newblock Zassenhaus conjecture for integral group ring of simple linear
  groups.
\newblock {\em J. Algebra Appl.}, 12(6):1350016, 10, 2013.

\bibitem[Her06]{HertweckAlgColloq}
M.~Hertweck.
\newblock On the torsion units of some integral group rings.
\newblock {\em Algebra Colloq.}, 13(2):329--348, 2006.

\bibitem[Her07]{HertweckPA}
M.~Hertweck.
\newblock Partial augmentations and {B}rauer character values of torsion units
  in group rings.
\newblock {\em http://arXiv:math/0612429v2}, 2007.

\bibitem[Her08]{HertweckA6}
M.~Hertweck.
\newblock Zassenhaus conjecture for {$A_6$}.
\newblock {\em Proc. Indian Acad. Sci. Math. Sci.}, 118(2):189--195, 2008.

\bibitem[Hig40]{HigmanPaper}
G.~Higman.
\newblock The units of group-rings.
\newblock {\em Proc. London Math. Soc. (2)}, 46:231--248, 1940.

\bibitem[HP72]{HughesPearson}
I.~Hughes and K.~R. Pearson.
\newblock The group of units of the integral group ring {$ZS_{3}$}.
\newblock {\em Canad. Math. Bull.}, 15:529--534, 1972.

\bibitem[KK13]{KimmerleKonovalov2013}
W.~Kimmerle and A.~Konovalov.
\newblock Recent advances on torsion subgroups of integral group rings.
\newblock {\em Proc. Groups St Andrews 2013}, 422:331--347, 2013.

\bibitem[LP89]{LutharPassi1989}
I.S. Luthar and I.B.S. Passi.
\newblock Zassenhaus conjecture for {$A_5$}.
\newblock {\em Proc. Indian Acad. Sci. Math. Sci.}, 99(1):1--5, 1989.

\bibitem[LT91]{LutharTrama1991}
I.~S. Luthar and P.~Trama.
\newblock Zassenhaus conjecture for {$S_5$}.
\newblock {\em Comm. Algebra}, 19(8):2353--2362, 1991.

\bibitem[Mar15]{MargolisThesis}
L.~Margolis.
\newblock {\em Torsionseinheithen in ganzzahligen Gruppenringen nicht
  auflösbarer Gruppen}.
\newblock 2015.
\newblock Ph.D. Thesis --Stuttgart.

\bibitem[Mar16]{Margolis2016}
L.~Margolis.
\newblock A {S}ylow theorem for the integral group ring of {${\rm PSL}(2,q)$}.
\newblock {\em J. Algebra}, 445:295--306, 2016.

\bibitem[MdRS16]{MargolisDelRioSerrano2016}
L.~Margolis, \'{A}. del R\'{\i}o, and M.~Serrano.
\newblock Zassenhaus {C}onjecture on torsion units holds for $\text{PSL}(2,p)$
  with $p$ a {F}ermat or {M}ersenne prime.
\newblock 2016.
\newblock 32 pages. http://arxiv.org/abs/1608.05797.

\bibitem[MRSW87]{MarciniakRitterSehgalWeiss}
Z.~Marciniak, J.~Ritter, S.~K. Sehgal, and A.~Weiss.
\newblock Torsion units in integral group rings of some metabelian groups.
  {II}.
\newblock {\em J. Number Theory}, 25(3):340--352, 1987.

\bibitem[Seh93]{SehgalBookUnits}
S.~K. Sehgal.
\newblock {\em Units in integral group rings}, volume~69 of {\em Pitman
  Monographs and Surveys in Pure and Applied Mathematics}.
\newblock Longman Scientific \& Technical, Harlow, 1993.

\bibitem[Ser78]{Serre1977}
Jean-Pierre Serre.
\newblock {\em Repr\'esentations lin\'eaires des groupes finis}.
\newblock Hermann, Paris, revised edition, 1978.

\bibitem[Sri64]{Srinivasan}
B.~Srinivasan.
\newblock On the modular characters of the special linear group
  {$SL(2,\,p^{n})$}.
\newblock {\em Proc. London Math. Soc. (3)}, 14:101--114, 1964.

\bibitem[Wag95]{WagnerThesis}
R.~Wagner.
\newblock Zassenhausvermutung uber die gruppen {PSL}(2,p).
\newblock {\em Diplomarbeit Universitat Stuttgart}, 1995.

\bibitem[Wei91]{Weiss1991}
A.~Weiss.
\newblock Torsion units in integral group rings.
\newblock {\em J. Reine Angew. Math.}, 415:175--187, 1991.

\end{thebibliography}
	
\end{document}